 \newlength{\baseunit}               
\newcommand\isom{\cong}
\newcommand\Proj{\operatorname{Proj}}
\newcommand\Pic{\operatorname{Pic}}
\newcommand\bq{\begin{equation}}
\newcommand\eq{\end{equation}}
\newtheorem{proposition}{Proposition}[section]
\newtheorem{theorem}[proposition]{Theorem}
\newtheorem{corollary}[proposition]{Corollary}
\newtheorem{example}[proposition]{Example}
\newtheorem{lemma}[proposition]{Lemma}
\theoremstyle{definition}
\theoremstyle{remark}
\newtheorem{remark}[proposition]{Remark}
\numberwithin{equation}{section}
\newcommand{\cut}[1]{}
\newcommand\hidden[1]{}
\renewcommand{\cD}{\mathcal{D}}
\newcommand{\PP}{\mathbb{P}}
\newcommand{\QQ}{\mathbb{Q}}
\newcommand{\RR}{\mathbb{R}}
\newcommand{\setmin}{{\smallsetminus}}                                %
\newcommand{\ZZ}{{\mathbb{Z}}}                                        %
\newcommand{\cO}{{\mathcal O}}                                        %
\newcommand{\Cox}{\operatorname{Cox}}                                   %
\newcommand{\bY}{{\bar{Y}}}                                        %
\newcommand{\bZ}{{\bar{Z}}}                                        %
\newcommand{\bbeta}{{\bar{\beta}}}                                        %
\newcommand{\bgamma}{{\bar{\gamma}}}                                        %
\newcommand{\Bl}{\operatorname{Bl}}                                   
\newcommand{\Cl}{\operatorname{Cl}}                                   
\title{Examples of non-finitely generated Cox rings.}
\author{Jos\'e Luis Gonz\'alez and Kalle Karu}
\address{J.L. Gonz\'alez,  Department of Mathematics, University of California, Riverside,
  Riverside, CA 92521, United States.  \newline \indent
K. Karu,
Department of Mathematics, University of British Columbia, 
  Vancouver, BC V6T1Z2, Canada.} 
\email{jose.gonzalez@ucr.edu, karu@math.ubc.ca}
\thanks{The second author was supported by a NSERC Discovery grant.}
\begin{document}
\begin{abstract}
We bring examples of toric varieties blown up at a point in the torus that do not have finitely generated Cox rings. These examples are generalizations of \cite{GK} where toric surfaces of Picard number $1$ were studied. In this article we consider toric varieties of higher Picard number and higher dimension. In particular, we bring examples of weighted projective $3$-spaces blown up at a point that do not have finitely generated Cox rings.
\end{abstract}
\maketitle
\setcounter{tocdepth}{1} 




\section{Introduction}

We work over an algebraically closed field $k$ of characteristic $0$. 

Our aim in this article is to bring examples of varieties $X$ that do not have finitely generated Cox rings. Our varieties $X$ are toric varieties $X_\Delta$ blown up at a point $t_0$ in the torus. In \cite{GK} we constructed examples of such toric surfaces $X_\Delta$ of Picard number $1$. In this article we generalize this construction to toric varieties of higher Picard number and higher dimension. 

Let us recall the definition by Hu and Keel \cite{HuKeel} of the Cox ring of a normal projective variety $X$:
\[ \Cox(X) = \bigoplus_{[D]\in \Cl(X)} H^0(X, \cO_X(D)).\] 
Giving a ring structure to this space involves some choices, but finite generation of the resulting $k$-algebra does not depend on the choices. A normal projective $\QQ$-factorial variety $X$ is called a Mori Dream Space (MDS) if $\Cox(X)$ is a finitely generated $k$-algebra. 

The construction in \cite{GK} was based on the examples of blowups at a point of weighted projective planes by Goto, Nishida and Watanabe \cite{GNW} and the geometric description of these examples by Castravet and Tevelev \cite{CastravetTevelev}. 
A basic fact about Cox rings is that on a MDS $X$ every nef divisor is semiample (i.e. there exists a positive multiple of the divisor that has no base locus and defines a morphism $X\to \PP^n$). To prove that $X$ is not a MDS, it suffices to find a nef divisor $D$ that is not semiample. The examples in \cite{GK} have Picard number 2 and
there is essentially a unique choice for $D$. The class of $D$ necessarily has to lie on the boundary of the (2-dimensional) nef cone. One of the boundary rays is generated by the class $H$ of the pullback of an ample divisor on $X_\Delta$, which is clearly semiample. It follows that $D$ must lie on the other boundary ray. In the case where $X$ is a surface, this other boundary ray is determined if we can find a curve $C$ of negative self-intersection on $X$, different from the exceptional curve.  

In general, the existence of a nef divisor $D$ on $X$ that is not semiample is only a sufficient condition for $X$ being a non-MDS. When $X_\Delta$ is a weighted projective plane $\PP(a,b,c)$, then Cutkosky \cite{Cutkosky} has shown that $X$ is a MDS if and only if the divisor $D$ as above is semiample.

There are two essential differences in the proof of non-finite generation when going to higher Picard number or higher dimension. In the case of surfaces $X$ with Picard number $p>2$ we still look for a curve $C\subset X$ of negative self-intersection. This curve now defines a $(p-1)$-dimensional face of the nef cone and there is no obvious choice for the non-semiample divisor $D$. We show that a general divisor on this face is not semiample. 

In dimension greater than $2$ we will encounter normal projective varieties $X$ that are not $\QQ$-factorial. 
For such varieties the Cox ring and MDS are defined in the same way as above. (This generalizes slightly the definition of Hu and Keel \cite{HuKeel} who required a MDS to be $\QQ$-factorial.) In this greater generality, if $X$ has a free class group and a finitely generated Cox ring, then its cones of effective, moving, semiample and nef divisors are polyhedral \cite[Theorem 4.2, Theorem 7.3, Remark 7.6]{BH07}. Moreover, the cones of nef Cartier divisors and semiample Cartier divisors coincide \cite[Corollary 7.4]{BH07}.
In our examples we find nef Cartier divisors $D$ that are not semiample and hence $X$ is not a MDS.

{\bf Acknowledgment.} We thank J\"urgen Hausen for explaining us various details in the definition of Cox rings. 

\section{Statement of the main results.}

We use the terminology of toric varieties from \cite{Fulton}. Let $X_\Delta$ be the toric variety defined by a rational convex polytope $\Delta$ and let $X$ be the blowup of $X_\Delta$ at a general point, which we can assume to be the identity point $t_0 = (1,1,\ldots,1)$ in the torus. We are interested in the Cox ring of $X$.

\subsection{The case of surfaces.}

Let $\Delta$ be a convex plane $4$-gon with rational vertices $(0,0)$, $(0,1)$, $P_L=(x_L, y_L)$, $P_R=(x_R, y_R)$, where $x_L<0$ and $x_R>0$ (see Figure~\ref{fig-4gon}). The polygon can equivalently be defined by the slopes of its sides, $s_1,s_2,s_3,s_4$. We will assume that the slope $s_2$ of the side connecting $(0,0)$ and $P_R$ satisfies $0\leq s_2 < 1$. When $x_R\leq 1$,  this can always be achieved without changing the isomorphism class of $X_{\Delta}$ by applying an integral shear transformation $(x,y)\mapsto(x,y+ax)$ for some $a\in\ZZ$ to the polytope.

\begin{figure}[ht] 
\centerline{\psfig{figure=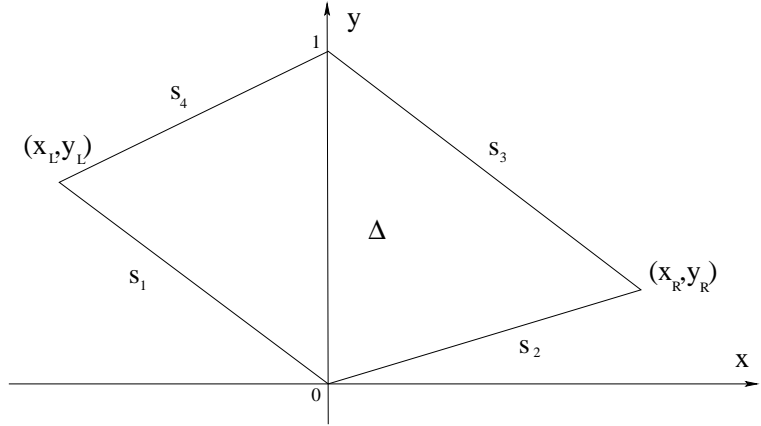,width=10cm}}
\caption{Polygon $\Delta$.}
\label{fig-4gon}
\end{figure}

Choose $m>0$ such that $m\Delta$ is integral. We study lattice points in $m\Delta$. Let us denote by column $c$ in $m\Delta$ the set of lattice points with first coordinate $x=c$.

\begin{theorem}\label{thm-2D}
Let $\Delta$ be a rational plane $4$-gon as above. Assume that $0\leq s_2 <1$ and let $m>0$ be sufficiently large and divisible so that $m\Delta$ is integral. The variety $X= \Bl_{t_0} X_\Delta$ is not a MDS if the following two conditions are satisfied:
\begin{enumerate}
\item Let $w=x_R-x_L$ be the width of $\Delta$. Then $w<1$.
\item Let the column $mx_L+1$ in $m\Delta$ consist of $n$ points $(mx_L+1,b+i)$, $i=0,\ldots, n-1$. Then
\begin{enumerate}
\item columns $mx_R, mx_R-1,\ldots,mx_R-n+1$ in $m\Delta$ have $1,2,\ldots,n$ lattice points, respectively;
\item $m y_L$ is not equal to $b+i$, $i=1,\ldots,n-1$.
\end{enumerate}
\end{enumerate}

If the width $w=1$ or $\Delta$ degenerates to a triangle with slopes $s_1=s_2$, then $X$ is not a MDS if in addition to (1') $w\leq 1$ and (2) the following holds:
\begin{enumerate}
\item[(3)] Let $s=\frac{y_R-y_L}{w}$ be the slope of the line joining the left and right vertices. Then $my_L\neq b-ns$.
\end{enumerate}
\end{theorem}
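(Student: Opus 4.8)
The plan is to exhibit a nef $\QQ$-divisor on $X$ that fails to be semiample; by the facts recalled in the introduction this forces $X$ not to be a MDS. Write $\pi \colon X = \Bl_{t_0} X_\Delta \to X_\Delta$ for the blow-up and $E$ for the exceptional curve. Since $X_\Delta$ comes from a $4$-gon, $\Cl(X)_{\QQ}$ has rank $3$: it is generated by the pullbacks of two independent classes on $X_\Delta$ together with $E$. First I would record the intersection pairing on $\Cl(X)_{\QQ}$ explicitly (using $\pi^{*}L\cdot E = 0$, $E^2=-1$), so that self-intersections are computable combinatorially from the polytope; throughout I work with $\QQ$-divisors since $X_\Delta$ is only $\QQ$-factorial.

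The heart of the argument is the construction of a \emph{negative curve} $C$. Let $L$ be the ample class on $X_\Delta$ with polytope $m\Delta$, so that $H^0(X_\Delta,\cO(L))$ is spanned by the monomials indexed by the lattice points of $m\Delta$ and $L^2 = 2\,\mathrm{Area}(m\Delta)$. Imposing vanishing to order $n$ at $t_0$ cuts out $\binom{n+1}{2}$ linear conditions on these sections, and I would use hypotheses (1) and (2) — the width bound $w<1$, the precise column counts $1,2,\dots,n$ near the right vertex, and the genericity statement (2b) — to show that these conditions are independent and leave a one-dimensional space of sections. The associated curve is then rigid, irreducible and reduced, and its proper transform $C$ has class $\pi^{*}L - nE$. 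The same lattice-point bookkeeping gives $C^2 = L^2 - n^2 < 0$: verifying this inequality is a direct count from (1) and (2), and it is precisely the thinness $w<1$ together with the column data that makes $n^2$ exceed $L^2$.

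With $C$ in hand I would describe the nef cone. As $X$ is a surface, the nef cone is dual to the Mori cone, and the irreducible curve $C$ with $C^2<0$ spans an extremal ray of the latter; dually it cuts out a two-dimensional facet $F = \{\,D\ \text{nef} : D\cdot C = 0\,\}$ of the three-dimensional nef cone. One boundary ray of $F$ is spanned by the pullback $H$ of an ample class from $X_\Delta$, which is manifestly semiample, so the interesting divisors are the interior points of $F$. The last and hardest step is to prove that a general $D$ in the relative interior of $F$ is nef but not semiample. For such $D$ the contraction of $C$ is birational, so $D^2>0$ and $C$ is the only curve on which $D$ is trivial; were $D$ semiample it would contract exactly $C$ and present $\bigoplus_{k} H^0(X,\lfloor kD\rfloor)$ as a finitely generated ring whose $\operatorname{Proj}$ is the (then projective) contraction of $C$. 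I expect the main obstacle to be ruling this out, and I would do so exactly as in the Picard-number-$2$ case of \cite{GK}: show that this section ring is \emph{not} finitely generated. The combinatorial hypotheses are arranged so that the ring reproduces a symbolic Rees algebra of Goto--Nishida--Watanabe type \cite{GNW}, whose non-finite generation is governed by the arithmetic encoded in the column data; this simultaneously shows $D$ is not semiample and that the contraction of $C$ is not projective, giving the required contradiction.

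Finally, the boundary cases $w=1$ and the triangle degeneration $s_1=s_2$ are exactly where the clean count above breaks down, and I would treat them by the same scheme after replacing (1) by the weaker (1$'$) $w\leq 1$ and adjoining the genericity condition (3). The extra inequality $my_L \neq b-ns$ is precisely what is needed to keep the $n$ vanishing conditions independent once the polytope is no longer strictly thin, so that the rigid negative curve $C$ of class $\pi^{*}L - nE$ still exists and the non-finite-generation argument of the previous paragraph carries over unchanged.
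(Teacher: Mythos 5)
Your overall strategy (exhibit a nef divisor that is not semiample) is the same as the paper's, but both of your central steps fail as stated. The negative curve you propose does not exist: with $L$ the class of the polytope $m\Delta$ one has $L^2 = 2\,\mathrm{Area}(m\Delta) = m^2 w$, which grows quadratically in $m$, while $n$ (the number of lattice points in column $mx_L+1$) is bounded independently of $m$, since that column is a segment of fixed length determined by the slopes at $P_L$. So the inequality $C^2 = L^2 - n^2 < 0$ is false for the large, divisible $m$ required by the theorem (e.g.\ in the paper's first example $m=4$, $w=1$, $n=1$ gives $L^2 - n^2 = 15$). Your dimension count is off for the same reason: imposing order-$n$ vanishing at $t_0$ is only $\binom{n+1}{2}$ conditions on the roughly $m^2w/2$ lattice points of $m\Delta$, so it leaves a huge linear system, not a single rigid curve. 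The paper's curve is much simpler and unrelated to $n$: the polygon $\Delta$ contains the two lattice points $(0,0)$ and $(0,1)$, the closure $\overline{C}$ of the binomial curve $\{1-y=0\}$ has class corresponding to $\Delta$ itself, hence $\overline{C}^2 = 2\,\mathrm{Area}(\Delta) = w$, and it passes through $t_0$ with multiplicity one, so its strict transform $C = \pi^*\overline{C} - E$ has $C^2 = w-1 < 0$ exactly by hypothesis (1). Relatedly, your description of the face $F = C^\perp \cap Nef(X)$ is wrong: a pullback $H$ of an ample class satisfies $H\cdot C = H\cdot \overline{C} > 0$, so no such $H$ lies on $F$; the paper's lemma shows $F$ consists precisely of the classes $H-E$ where $H$ corresponds to a polygon of width exactly $1$.

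The second gap is the non-semiampleness argument, which you defer to ``reproducing a symbolic Rees algebra of Goto--Nishida--Watanabe type.'' This cannot work in the stated generality: \cite{GNW} concerns specific weighted projective planes (i.e.\ triangles), whereas the theorem covers arbitrary rational $4$-gons satisfying (1)--(2), and producing a self-contained replacement for that input is exactly the content of the paper's proof. What the paper actually does is prove a base-point statement: for $D$ a general divisor on the face $F$, given by a width-one $4$-gon $\Delta'$, every global section of $\cO_X(mD)$ --- a Laurent polynomial supported on $m\Delta'$ vanishing to order at least $m$ at $t_0$ --- has vanishing coefficient at the left vertex $mQ_L$, so the torus-fixed point corresponding to $P_L$ is a base point of every multiple of $D$. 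This is established by exhibiting a differential operator $\cD = \partial_x^{m-n-1}\tilde{\cD}$ of order at most $m-1$ killing all monomials of $m\Delta'$ at $t_0$ except $\chi^{mQ_L}$; this is where $n$ and hypotheses (2a), (2b) actually enter (they control the sets $S_1,S_2,S_3$ of surviving monomials fed into Lemma~\ref{lem-deriv-2D}), and the genericity of $D$ on the face supplies the final nonvanishing $\beta - B - \frac{nB}{A}\neq 0$. Condition (3) is likewise not about the existence of a negative curve, as you suggest: it is needed precisely in the degenerate cases $w=1$ or $\Delta$ a triangle, where $C^2 = 0$, the face collapses to the ray spanned by $C$ itself, $D$ can no longer be deformed, and (3) substitutes for the genericity argument.
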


\begin{example}
Consider $\Delta$ with $(x_L,y_L)=(-3/4, 1/2)$ and $(x_R,y_R)=(1/4, 3/4)$. 

\begin{figure}[ht] \label{fig-4gon1}
\centerline{\psfig{figure=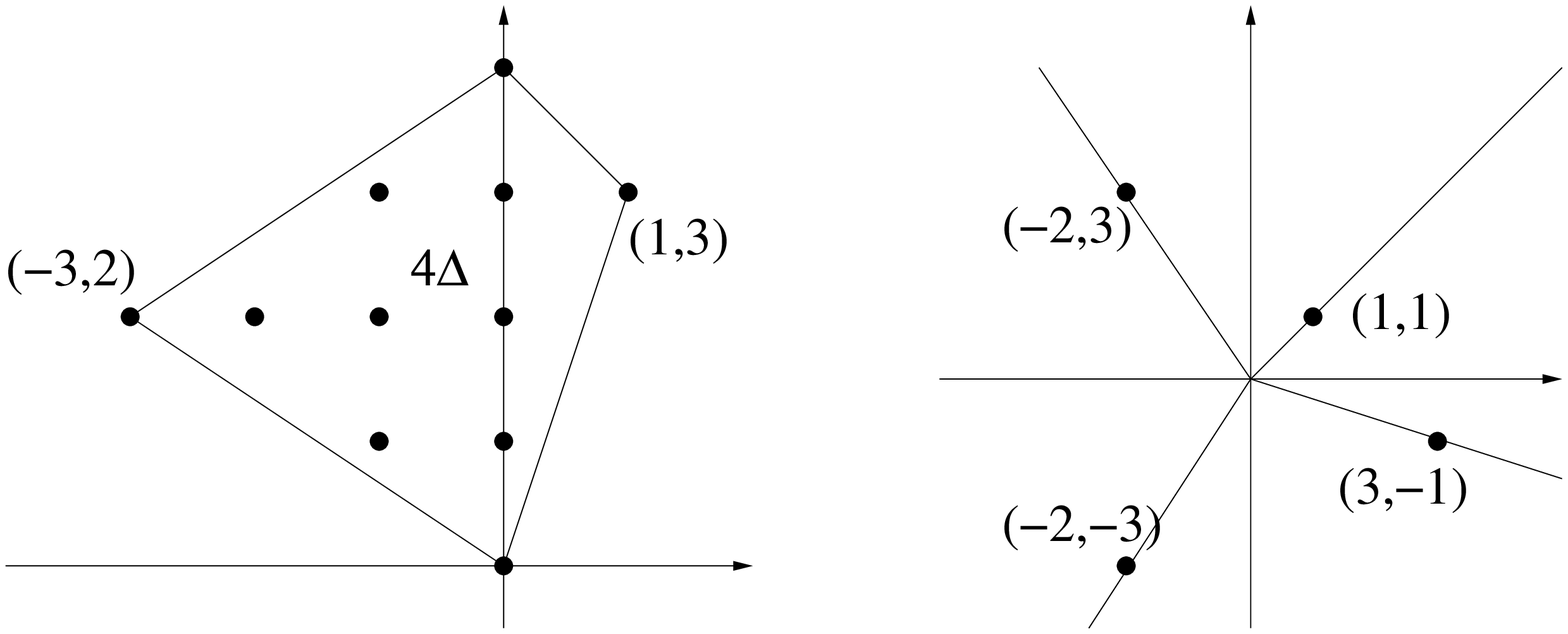,width=10cm}}
\caption{Polygon $4\Delta$ and the corresponding (outer) normal fan.}
\end{figure}

In this case $w=1$ and $n=1$. When $n=1$ condition (2) of the theorem is vacuously true and condition (3) states that the single lattice point in column $mx_L+1$ does not lie on the line joining the left and right vertices. (These conditions still hold after applying an integral shear transformation as above, hence the assumption $0\leq s_2 <1$ is not necessary in the $n=1$ case.) 
This gives an example of a surface $X$ of Picard number $3$ that is not a MDS. 
Notice that if we move the vertex $(x_R,y_R)$ to $(1/4, 1)$ or $(1/4,7/6)$, but not $(1/4,1/2)$, the theorem applies and we again get an example of a non-MDS.
\end{example}

When $\Delta$ degenerates to a triangle then Theorem~\ref{thm-2D} reduces to the case considered in \cite{GK}. In the case of a triangle, He \cite{He} has generalized condition (2a) to a weaker one. We expect that such a generalization also exists in the case of 4-gons.

By a result of Okawa \cite{Okawa}, if $Y\to X$ is a surjective morphism of (not necessarily $\mathbb{Q}$-factorial) normal projective varieties,
and $X$ is not a MDS, then $Y$ is also not a MDS. Thus, if $X= \Bl_{t_0} X_\Delta$ is not a MDS, we can replace $X_\Delta$ with any toric blowup $X_{\hat{\Delta}}$ to produce non-MDS of higher Picard number. Our methods do not give examples of surfaces other than the ones obtained from a plane $4$-gon. The proof below shows that finite generation of the Cox ring of $X$ only depends on the singularities at the two torus fixed points corresponding to $P_L, P_R$ and the curve of negative self-intersection $C\subset X$ passing through these points. If $X_\Delta$ has toric divisors that do not pass through the two torus fixed points, then these can be contracted.

\subsection{ Higher dimensional varieties}

We first generalize Theorem~\ref{thm-2D} to dimension $3$ and then discuss generalizations to dimension $4$ and higher.

Let now $\Delta$ be a rational convex $3$-dimensional polytope with vertices $(0,0,0)$, $(0,1,0)$, $(0,0,1)$, $P_L=(x_L,y_L,z_L)$, $P_R=(x_R,y_R,z_R)$, where $x_L<0$ and $x_R>0$. We allow $\Delta$ to degenerate to a tetrahedron, where the points $(0,0,0), P_L, P_R$ are collinear.

\begin{figure}[ht] \label{fig-polytope}
\begin{center}
\includegraphics[height=7cm]{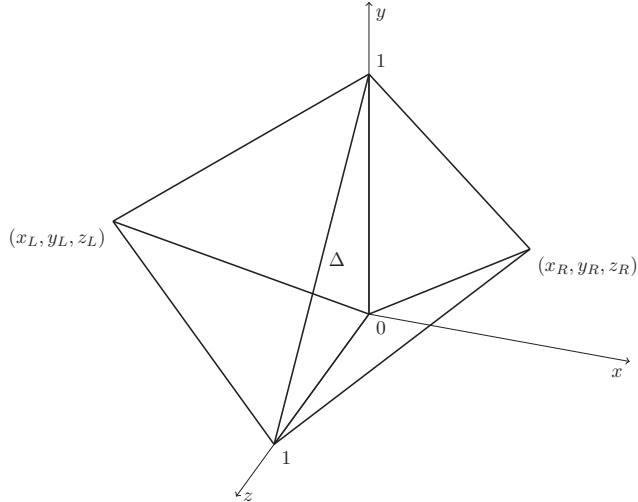}
\end{center}
\caption{Polytope $\Delta$.}
\end{figure}


We assume that $0\leq \frac{y_R}{x_R}, \frac{z_R}{x_R} <1$. When $x_R\leq 1$, this can be achieved by applying an integral shear transformation to the polytope.

Let $m\Delta$ be integral. A slice $c$ of $m\Delta$ consists of all lattice points in $m\Delta$ with first coordinate $x=c$. Such a slice forms a right triangle with $n$ lattice points on each side. We say that the slice has size $n$.

\begin{theorem}\label{thm-3D}
Let $\Delta$ be a $3$-dimensional polytope as above. Assume that $0\leq \frac{y_R}{x_R}, \frac{z_R}{x_R} <1$ and let $m>0$ be sufficiently large and divisible so that $m\Delta$ is integral. The variety $X= \Bl_{t_0} X_\Delta$ is not a MDS if the following three conditions are satisfied:
\begin{enumerate}
\item Let $w=x_R-x_L$ be the width of $\Delta$. Then $w\leq 1$.
\item Let the slice $mx_L+1$ in $m\Delta$ have size $n$ with points $(mx_L+1,b+i,c+j)$, $i,j\geq 0$, $i+j<n$. Then
\begin{enumerate}
\item the slices $mx_R, mx_R-1, \ldots, mx_R-n+1$ in $m\Delta$ have size $1,2,\ldots,n$, respectively;
\item $(m y_L, mz_L)$ is not equal to $(b+i,c+j)$ for any $i,j\geq 1$, $i+j<n$.
\end{enumerate}
\item Let $s_y = \frac{y_R-y_L}{w}$, $s_z = \frac{z_R-z_L}{w}$ be the two slopes of the line joining left and right vertices. Then
\begin{enumerate}
\item $(my_L, mz_L) \neq (b-n s_y, c-n s_z)$;
\item \begin{enumerate}
\item if $m y_L = b-ns_y$ and $c<mz_L<c+n$, then $s_y\neq 0$;
\item if $m z_L = c-ns_z$ and $b<my_L<b+n$, then $s_z\neq 0$;
\item if $my_L+ m z_L = b-ns_y+c-ns_z$ and $b<my_L, c<mz_L$, then $s_y+s_z\neq -1$.
\end{enumerate}
\end{enumerate}
\end{enumerate}
\end{theorem}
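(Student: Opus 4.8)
The plan is to reproduce, in the three-dimensional and possibly non-$\QQ$-factorial setting, the strategy underlying Theorem~\ref{thm-2D}: I will exhibit a single nef Cartier divisor $D$ on $X$ that is not semiample. By the facts recalled in the introduction (if $X$ has free class group and finitely generated Cox ring, then the nef Cartier and semiample Cartier cones coincide, \cite[Corollary 7.4]{BH07}), the existence of such a $D$ forces $X$ not to be an MDS. The first step is to locate the relevant negative curve. I expect $C\subset X$ to be the proper transform of a rational curve through $t_0$ whose closure meets the two torus-fixed points corresponding to the vertices $P_L,P_R$; the hypothesis $w\le 1$ in condition (1) is what makes the width-one ``long direction'' of $\Delta$ produce a curve with the correct negativity. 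Intersecting against $C$ cuts out a face $F$ of the cone of nef Cartier divisors, one of whose extremal rays is spanned by the pullback $H$ of an ample class from $X_\Delta$ (which is visibly semiample); the divisor $D$ I want lives on $F$ but off the ray $\RR_{\ge 0}H$.

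The combinatorial heart of the argument is the translation of the slice-counting data into intersection numbers and section spaces. With $m$ large and divisible, the lattice points of $m\Delta$ in a slice $x=c$ form a right triangle, and the dimension of the relevant graded piece of the section ring of $D$ is governed by these triangular counts. Condition (2a), that the slices $mx_R,mx_R-1,\dots,mx_R-n+1$ have sizes $1,2,\dots,n$ matched against the size-$n$ slice $mx_L+1$, is exactly what makes the contributions from the two ends of $\Delta$ balance, so that $D\cdot C=0$ and $C$ is rigid. I would verify this by a direct lattice-point computation slice by slice, attempting to reduce each slice to the two-dimensional bookkeeping already carried out for Theorem~\ref{thm-2D}.

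To prove that $D$ is not semiample I would argue by contradiction: if some multiple $kD$ were base-point free, it would define a morphism $\varphi\colon X\to\PP^N$ contracting $C$, and every section in $H^0(X,kD)$ would be pulled back from monomials of $X_\Delta$ vanishing to the prescribed order at $t_0$. The non-coincidence conditions (2b) and (3a) guarantee that $t_0$ is generic with respect to the lattice configuration: they ensure $(my_L,mz_L)$ avoids the interior points $(b+i,c+j)$ and the distinguished point $(b-ns_y,c-ns_z)$ of the slice, so that $C$ genuinely acquires negativity and is not swept out by $|kD|$. From this I would derive that the graded section ring of $D$ fails to be finitely generated, contradicting semiampleness. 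The sub-cases (3b)(i)--(iii) handle the degenerate configurations where $(my_L,mz_L)$ lies on a bounding edge of the triangular slice; there the slope hypotheses $s_y\ne0$, $s_z\ne0$, or $s_y+s_z\ne-1$ are precisely the extra input keeping the obstruction alive.

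The main obstacle I anticipate is this non-semiampleness step in the non-$\QQ$-factorial setting: without $\QQ$-factoriality one cannot freely pass between Weil and Cartier divisors, so $C$ must be handled through Cartier divisors and their intersection theory throughout, and the edge cases of condition (3) require checking that the combinatorial obstruction survives exactly when a single lattice point of the slice lands on a bounding facet. Establishing that the chosen $D$ is genuinely Cartier (not merely $\QQ$-Cartier) on the singular $X$, while still lying on the face $F$ and off $\RR_{\ge 0}H$, is the technical crux.
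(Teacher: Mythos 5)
Your skeleton (negative curve $C$ through $t_0$ meeting the fixed points over $P_L,P_R$, a nef Cartier divisor $D$ dual to it, and the Berchtold--Hausen criterion from the introduction) matches the paper's, but the step that actually constitutes the theorem --- proving $D$ is not semiample --- is absent. Saying you would ``derive that the graded section ring of $D$ fails to be finitely generated, contradicting semiampleness'' is a restatement of the goal, not an argument. The paper's mechanism is concrete: with $M=mw$, a section of $\cO_X(mH-ME)$ is $f=\sum_{q\in m\Delta\cap\ZZ^3}c_q\chi^q$ vanishing to order at least $M$ at $t_0$, and one shows $c_{mP_L}=0$, i.e.\ the torus-fixed point over $P_L$ is a base point of every such multiple. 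This is done by exhibiting a differential operator $\cD=\partial_x^{M-n-1}\tilde{\cD}$ of order $M-1$ which, evaluated at $t_0$, kills every monomial $\chi^q$ with $q\neq mP_L$ but not $\chi^{mP_L}$. After translating $m\Delta$ so that $mP_R=(M-2,0,0)$, the monomials surviving $\partial_x^{M-n-1}$ fall into three sets $T_1,T_2,T_3$, and the existence of $\tilde{\cD}$ is equivalent (via $p(x\partial_x,y\partial_y,z\partial_z)(\chi^q)|_{t_0}=p(q)$ and the explicit polynomials $p_d$ with $p_d(-A,Y,Z)=[Y-B]_d[Z-C]_{n-d}$) to the lattice-point statement of Lemma~\ref{lem-deriv-3D}. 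Without this derivative computation, or a replacement for it, there is no proof.

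This omission is tied to a misreading of what the hypotheses do. Condition (2a) does not ``make $D\cdot C=0$'': that is automatic, since $D=\frac{1}{w}H-E$ spans the ray of $Nef(X)$ dual to $C$, and $(\frac{1}{w}H-E)(H-E)^2=\frac{1}{w}H^3-1=0$ follows from $H^3=w$ alone. (Note also that $N^1(X)$ is $2$-dimensional here, so $C^\perp\cap Nef(X)$ is a single ray not containing $H$; there is no positive-dimensional face $F$ to move in, contrary to your picture --- this rigidity is exactly why the extra condition (3) appears, as in the $w=1$/triangle case of Theorem~\ref{thm-2D}.) The real role of (2a), together with the shear normalization $0\leq \frac{y_R}{x_R},\frac{z_R}{x_R}<1$, is to guarantee that the right-hand end of $m\Delta$ contributes exactly the simplex $T_3=\{(l,i,j):l,i,j\geq 0,\ l+i+j<n\}$ after applying $\partial_x^{M-n-1}$. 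Similarly, (2b), (3a), (3b) do not ``make $C$ acquire negativity'' (that is condition (1), via $S_i\cdot C=S_i^3=w-1\leq 0$ where $C=S_1\cap S_2$ is the intersection of the strict transforms of the closures of $\{1-y=0\}$ and $\{1-z=0\}$); they are precisely conditions (1), (2), (3) of Lemma~\ref{lem-deriv-3D}, i.e.\ the exact criterion for the needed derivative to exist. Finally, your worry about $D$ being Cartier is easily dispatched: $m\Delta$ integral makes $mH$ Cartier, and $E$ is Cartier since $t_0$ is a smooth point, so $mH-ME$ is Cartier.
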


The case $n=1$ of the theorem simplifies considerably as follows.

\begin{corollary}\label{cor-3D-n1}
Let $\Delta$ be a $3$-dimensional polytope as above and let $m>0$ be sufficiently large and divisible  so that $m\Delta$ is integral. 
 The variety $X= \Bl_{t_0} X_\Delta$ is not a MDS if the following three conditions are satisfied:
\begin{enumerate}
\item $w=x_R-x_L \leq 1$.
\item The slice $mx_L+1$ in $m\Delta$ consists of a single lattice point $P$.
\item The  point $P$ does not lie on the line joining the left and right vertices of $m\Delta$.
\end{enumerate}
\end{corollary}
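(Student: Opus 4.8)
The plan is to deduce this directly from Theorem~\ref{thm-3D} by specializing all of its hypotheses to the case $n=1$. Condition (2) of the corollary says precisely that the slice $mx_L+1$ has size $n=1$, so in the notation of the theorem we may write its unique lattice point as $P=(mx_L+1,b,c)$. Condition (1) of the corollary is literally condition (1) of the theorem. For condition (2a), when $n=1$ we only need the slice $mx_R$ to have size $1$; since $x_R$ is the strictly largest $x$-coordinate among the vertices of $\Delta$ and $P_R$ is its unique maximizer, the slice $mx_R$ consists of the single point $mP_R$, so (2a) holds automatically. For condition (2b), the index set $\{(i,j):i,j\ge 1,\ i+j<n\}$ is empty when $n=1$, so (2b) is vacuous.

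Next I would match condition (3). For (3a): parametrizing the segment from $mP_L=(mx_L,my_L,mz_L)$ to $mP_R$, a unit step in the $x$-direction changes $(y,z)$ by $(s_y,s_z)$, so at $x=mx_L+1$ the line through $mP_L$ and $mP_R$ passes through $(mx_L+1,\,my_L+s_y,\,mz_L+s_z)$. Hence $P=(mx_L+1,b,c)$ lies on this line if and only if $my_L=b-s_y$ and $mz_L=c-s_z$, that is, if and only if $(my_L,mz_L)=(b-s_y,c-s_z)$. Therefore condition (3) of the corollary (that $P$ avoid the line) is exactly (3a) of the theorem with $n=1$.

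The step that needs real care, and the only place I expect an obstacle, is (3b). The key observation is that $m\Delta$ is a lattice polytope, so its vertices $mP_L,mP_R$ and the point $P$ are lattice points; in particular $my_L,mz_L,b,c\in\Z$. With $n=1$, the hypothesis of (3b)(i) contains $c<mz_L<c+1$ and that of (3b)(ii) contains $b<my_L<b+1$, and since no integer lies strictly between two consecutive integers these hypotheses are never met, so (3b)(i) and (3b)(ii) hold vacuously. For (3b)(iii) with $n=1$, suppose its hypotheses held together with $s_y+s_z=-1$; then $my_L+mz_L=b+c-s_y-s_z=b+c+1$, whereas $b<my_L$ and $c<mz_L$ force $my_L\ge b+1$ and $mz_L\ge c+1$, hence $my_L+mz_L\ge b+c+2$, a contradiction. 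Thus (3b)(iii) also holds automatically, all hypotheses of Theorem~\ref{thm-3D} are satisfied, and $X$ is not a MDS.

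In short, the whole content of the reduction is the remark that integrality of $m\Delta$ collapses the delicate boundary conditions (3b); once this is noticed, the corollary is an immediate specialization. If one preferred not to cite Theorem~\ref{thm-3D} as a black box, the same computation isolates, in the $n=1$ case, the single requirement that $P$ miss the line through $mP_L$ and $mP_R$, and one could instead feed this directly into the non-semiampleness argument underlying the theorem.
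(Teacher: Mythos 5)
Your proposal is correct and takes essentially the same route as the paper: the paper's entire proof is the sentence that Corollary~\ref{cor-3D-n1} ``follows directly from Theorem~\ref{thm-3D},'' and your specialization to $n=1$ (slice $mx_R$ automatically a single point, (2b) vacuous, condition (3) matching (3a), and integrality of $m\Delta$ collapsing the boundary conditions (3b)) supplies exactly the routine verification the paper leaves implicit.
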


Theorem~\ref{thm-3D} in particular applies to the case where $\Delta$ is a tetrahedron.
The statement also simplifies in this case.

\begin{corollary}\label{cor-3D-tetr}
Let $\Delta$ be a $3$-dimensional tetrahedron as above, where the points $(0,0,0), P_L, P_R$ are collinear. Let $m>0$ be sufficiently large and divisible so that $m\Delta$ is integral. The variety $X= \Bl_{t_0} X_\Delta$ is not a MDS if the following three conditions are satisfied:
\begin{enumerate}
\item $w=x_R-x_L \leq 1$.
\item Let the slice $mx_L+1$ in $m\Delta$ have size $n$. Then 
 the slice $mx_R-n+1$ in $m\Delta$ has size $n$.
\item Let $s_y = \frac{y_R-y_L}{w}$, $s_z = \frac{z_R-z_L}{w}$ be the two slopes of the line joining left and right vertices. Then $n(s_y,s_z)\notin\ZZ^2$.
\end{enumerate}
\end{corollary}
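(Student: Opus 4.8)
The plan is to deduce Corollary~\ref{cor-3D-tetr} from Theorem~\ref{thm-3D} by checking that the three (cleaner) tetrahedral hypotheses force the full list of hypotheses (1), (2a), (2b), (3a), (3b) appearing in the theorem. The one structural gift of the degenerate case is that the line through $P_L$ and $P_R$ now passes through the origin, so $y_L/x_L=y_R/x_R$ and $z_L/x_L=z_R/x_R$; consequently the two slopes collapse to $s_y=y_R/x_R$ and $s_z=z_R/x_R$, and the standing normalization $0\le y_R/x_R,\,z_R/x_R<1$ becomes simply $0\le s_y,s_z<1$. Hypothesis (1) of the corollary is verbatim hypothesis (1) of the theorem, so no work is required there.

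The computational engine I would set up first is the homothety structure of the slices. For a tetrahedron every slice with $0<x<mx_R$ is the image of the central slice $\{x=0\}$ under the homothety centered at $mP_R$, and every slice with $mx_L<x<0$ is such an image under the homothety centered at $mP_L$. Tracking the three edges of the tetrahedron issuing from $mP_R$ shows that the slice $mx_R-k$ is the lattice right triangle with right-angle vertex at $(my_R-ks_y,\,mz_R-ks_z)$ and legs of Euclidean length $k/x_R$ in the $y$- and $z$-directions, with the symmetric description at $mP_L$. Counting the lattice points inside such a shifted right triangle then yields a closed formula for each slice size as a sum of floor functions of $ks_y$, $ks_z$ and $k/x_R$ (respectively $js_y$, $js_z$, $j/|x_L|$). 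In particular this computation simultaneously identifies the right-angle corner $(b,c)$ of the slice $mx_L+1$ and exhibits $(my_L,mz_L)$ as the apex $mP_L$ lying one slice away.

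With these formulas I would verify the theorem's remaining hypotheses one at a time. For (2a), the homothety forces the sizes of the slices $mx_R,mx_R-1,\dots$ to form a nondecreasing sequence governed by the single ratio $1/x_R$, and the plan is to propagate the corollary's input that the slice $mx_R-n+1$ already has size $n$ back down this chain so as to pin the intermediate sizes to $1,2,\dots,n$. For (2b), since the slice $mx_L+1$ is obtained from the apex $mP_L$ by the homothety, its projection $(my_L,mz_L)$ cannot fall on the strictly interior points $(b+i,c+j)$ with $i,j\ge1$. For (3a) and (3b) the common feature is that each asserts that $(my_L,mz_L)$ avoids the single exceptional point $(b-ns_y,c-ns_z)$ (and the attached degenerate lines); I would deduce these at once from the corollary's hypothesis $n(s_y,s_z)\notin\ZZ^2$, which says precisely that at least one coordinate of that exceptional point is non-integral, so it cannot coincide with the lattice point $(my_L,mz_L)$, after which the boundary cases (3b)(i)--(iii) fall out by examining which coordinate is the integral one.

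The step I expect to be the main obstacle is exactly (2a): converting ``the slice $mx_R-n+1$ has size $n$'' into ``the slices $mx_R,\dots,mx_R-n+1$ have the consecutive sizes $1,\dots,n$.'' This is delicate because, a priori, the floor-function slice sizes can jump by more than one between consecutive slices, so the endpoint condition alone need not propagate; getting this to work is where the collinearity must be used decisively, through the fact that both apexes are governed by the \emph{same} pair of slopes $s_y,s_z$ together with the width bound $w\le1$. Nailing down this floor-function bookkeeping—rather than the essentially formal deductions for (2b) and (3)—is the heart of the argument, and I would expect to spend most of the effort isolating the precise arithmetic inequality on $s_y,s_z,x_R,|x_L|$ that makes the size increments equal to one.
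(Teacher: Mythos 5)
Your overall strategy---reducing Corollary~\ref{cor-3D-tetr} to Theorem~\ref{thm-3D} after the shear normalization, with the homothety/corner description of the slices---is the same as the paper's, and your handling of conditions (1) and (2b) is sound. The fatal problem is your step (2a). You propose to show that the corollary's hypothesis ``the slice $mx_R-n+1$ has size $n$'' forces the slices $mx_R, mx_R-1,\ldots,mx_R-n+1$ to have sizes $1,2,\ldots,n$; you correctly flag this as the delicate point, but the implication is simply false, so no amount of floor-function bookkeeping can establish it. Concretely, take $y_0=3/5$, $z_0=3/10$, $x_R=5/9$, $x_L=-2/7$, so $w=53/63\leq 1$. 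Then the slice $mx_L+1$ has size $n=3$, the slice $mx_R-2$ has size $3=n$, and $n(s_y,s_z)=(9/5,9/10)\notin\ZZ^2$, so all three hypotheses of the corollary hold; but the slice $mx_R-1$ consists of the single lattice point $(my_R,mz_R)$, i.e.\ it has size $1$ rather than the required $2$, so hypothesis (2a) of Theorem~\ref{thm-3D} fails. (Here the sizes $1,1,3$ are nondecreasing but jump by two; when $1/x_R<y_0+z_0$ they need not even be monotone.) The missing idea is precisely how the paper closes this gap: when the chain of sizes breaks---which happens exactly when the slice $mx_R-1$ has size $\leq 1$, since if that slice has size at least $2$, superadditivity of the floor-function counts forces the sizes to increase by at least one per step and the endpoint condition then pins them---one does not apply Theorem~\ref{thm-3D} at all. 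Instead one reflects the tetrahedron across the $yz$-plane, interchanging the roles of $P_L$ and $P_R$; the reflected tetrahedron has $n=1$, and Corollary~\ref{cor-3D-n1} applies to it, its third hypothesis holding because $n(s_y,s_z)\notin\ZZ^2$ implies $(s_y,s_z)\notin\ZZ^2$. Without this reflection argument your proof cannot be completed.

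A secondary flaw: your derivation of conditions (3b)(i)--(iii) of Theorem~\ref{thm-3D} from $n(s_y,s_z)\notin\ZZ^2$ alone does not work. For instance, (3b)(i) requires $s_y\neq 0$ under the hypotheses $my_L=b-ns_y$ and $c<mz_L<c+n$, and $s_y=0$ is perfectly compatible with $n(s_y,s_z)\notin\ZZ^2$ (one only needs $ns_z\notin\ZZ$), so ``examining which coordinate is integral'' proves nothing here. The correct route, which is the paper's and which is already available from your own homothety observation, is vacuity: for a tetrahedron the normalization $0\leq s_y,s_z<1$ gives $my_L\leq b$ and $mz_L\leq c$, while each of the hypotheses in (3b)(i)--(iii) demands $my_L>b$ or $mz_L>c$; thus (3b) holds for the same trivial reason as (2b), and only (3a) actually uses $n(s_y,s_z)\notin\ZZ^2$.
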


We will study the tetrahedron case further to find examples where $X_\Delta$ is a weighted projective space $\PP(a,b,c,d)$. Let $(x_L, x_R, y_0,z_0)$ be such that 
\begin{gather*}
(x_L,y_L,z_L) = x_L (1,y_0,z_0),\\
 (x_R,y_R,z_R) = x_R (1,y_0,z_0).
\end{gather*}
Then the $4$-tuple of rational numbers $(x_L, x_R, y_0,z_0)$ determines the tetrahedron $\Delta$. The normal fan to $\Delta$ has rays generated by 
\begin{equation}\label{eq-rays} (y_0+z_0-\frac{1}{x_L}, -1,-1), (y_0+z_0-\frac{1}{x_R},-1,-1), (-y_0, 1, 0), (-z_0, 0,1).\end{equation}
The slice $mx_L+1$ in $m\Delta$ can be identified with lattice points in the triangle with vertices $(y_0,z_0), (y_0-\frac{1}{x_L}, z_0), (y_0, z_0-\frac{1}{x_L})$. It has size 
\[ n = 1+ \lfloor y_0+z_0-\frac{1}{x_L}\rfloor - \lceil y_0\rceil - \lceil z_0\rceil.\]
Similarly, the slice $mx_R-n+1$ in $m\Delta$ can be identified with lattice points in the triangle with vertices $(n-1)(y_0,z_0), (n-1)(y_0-\frac{1}{x_R}, z_0), (n-1)(y_0, z_0-\frac{1}{x_R})$. It has size 
\[ 1- \lceil (n-1)(y_0+z_0-\frac{1}{x_R})\rceil + \lfloor(n-1) y_0\rfloor + \lfloor (n-1)z_0\rfloor.\]
 
We can now state Corollary~\ref{cor-3D-tetr} in terms of $(x_L, x_R, y_0,z_0)$.

\begin{corollary}\label{cor-3D-tetr1}
Let $\Delta$ be a tetrahedron given by the $4$-tuple of rational numbers 
$(x_L, x_R, y_0,z_0)$, with $x_L<0$ and $x_R>0$. The variety $X= \Bl_{t_0} X_\Delta$ is not a MDS if the following three conditions are satisfied:
\begin{enumerate}
\item $w=x_R-x_L \leq 1$.
\item Let
\[ n = 1+ \lfloor y_0+z_0-\frac{1}{x_L}\rfloor - \lceil y_0\rceil - \lceil z_0\rceil.\]
 Then also 
\[ n= 1 - \lceil (n-1)(y_0+z_0-\frac{1}{x_R})\rceil + \lfloor(n-1) y_0\rfloor + \lfloor (n-1)z_0\rfloor.\]
\item $n(y_0,z_0)\notin\ZZ^2$.
\end{enumerate}
\end{corollary}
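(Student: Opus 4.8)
The plan is to deduce Corollary~\ref{cor-3D-tetr1} directly from Corollary~\ref{cor-3D-tetr}, by translating each of the three hypotheses of the latter into the language of the $4$-tuple $(x_L,x_R,y_0,z_0)$. Condition (1) is literally the same in both statements, so nothing is needed there. The entire task is therefore to verify two things: (a) that the two size expressions appearing in condition (2) of Corollary~\ref{cor-3D-tetr1} really do compute the sizes of the slices $mx_L+1$ and $mx_R-n+1$; and (b) that the slopes $s_y,s_z$ of Corollary~\ref{cor-3D-tetr} equal $y_0,z_0$, so that condition (3) becomes $n(y_0,z_0)\notin\ZZ^2$.

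Part (b) is immediate. Since $(x_L,y_L,z_L)=x_L(1,y_0,z_0)$ and $(x_R,y_R,z_R)=x_R(1,y_0,z_0)$, we have $y_R-y_L=(x_R-x_L)y_0=wy_0$ and likewise $z_R-z_L=wz_0$, whence $s_y=\tfrac{y_R-y_L}{w}=y_0$ and $s_z=z_0$. Thus $n(s_y,s_z)=n(y_0,z_0)$ and the two versions of condition (3) coincide verbatim.

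For part (a) I would first record the bounding planes of $\Delta$, whose normal directions are the rays \eqref{eq-rays}: two of them are $y=y_0x$ and $z=z_0x$ (the facets through the central line $P_LP_R$), and the facet through $(0,1,0),(0,0,1),P_L$ lies, after scaling by $m$, on $(\tfrac{1}{x_L}-y_0-z_0)x+y+z=m$, with the analogous plane $(\tfrac{1}{x_R}-y_0-z_0)x+y+z=m$ through $P_R$. Intersecting with $x=mx_L+1$ gives the three inequalities $Y\geq mx_Ly_0+y_0$, $Z\geq mx_Lz_0+z_0$, $Y+Z\leq mx_Ly_0+mx_Lz_0+y_0+z_0-\tfrac{1}{x_L}$. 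Because $mP_L$ is a lattice point of the integral polytope $m\Delta$, the numbers $mx_Ly_0,mx_Lz_0$ are integers, so the integer translation $(Y,Z)\mapsto(Y-mx_Ly_0,Z-mx_Lz_0)$ carries the lattice points of the slice bijectively onto those of the right triangle with vertices $(y_0,z_0),(y_0-\tfrac{1}{x_L},z_0),(y_0,z_0-\tfrac{1}{x_L})$. Counting, the smallest admissible integer values of the two coordinates are $\lceil y_0\rceil,\lceil z_0\rceil$ and the largest admissible value of their sum is $\lfloor y_0+z_0-\tfrac{1}{x_L}\rfloor$, giving a standard right-triangular array of size $n=1+\lfloor y_0+z_0-\tfrac{1}{x_L}\rfloor-\lceil y_0\rceil-\lceil z_0\rceil$.

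The slice $mx_R-n+1$ is handled the same way near $P_R$: the analogous three inequalities at $x=mx_R-n+1$, after the integer translation $(Y,Z)\mapsto(Y-mx_Ry_0,Z-mx_Rz_0)=(\tilde Y,\tilde Z)$ followed by the reflection $(\tilde Y,\tilde Z)\mapsto(-\tilde Y,-\tilde Z)$, identify the slice with the lattice points of the triangle with vertices $(n-1)(y_0,z_0),(n-1)(y_0-\tfrac{1}{x_R},z_0),(n-1)(y_0,z_0-\tfrac{1}{x_R})$, of size $1-\lceil(n-1)(y_0+z_0-\tfrac{1}{x_R})\rceil+\lfloor(n-1)y_0\rfloor+\lfloor(n-1)z_0\rfloor$. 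Hence condition (2) of Corollary~\ref{cor-3D-tetr1} is precisely the assertion that these two sizes agree, which is condition (2) of Corollary~\ref{cor-3D-tetr}, and the corollary follows. Since $n$ depends only on $(x_L,y_0,z_0)$ and is independent of $m$, both slices sit within a bounded distance of their respective vertices while the width $mw$ grows, so the hypothesis that $m$ is sufficiently large and divisible guarantees that $m\Delta$ is integral and that each slice is governed only by the three facets at the nearby vertex. The one place that needs genuine care is exactly this last point: I expect the main obstacle to be checking that all three defining inequalities remain active, so that each slice is the full right triangle described above rather than a truncation, which is what makes the floor/ceiling count valid.
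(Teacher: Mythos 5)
Your proposal is correct and follows essentially the same route as the paper: the authors also deduce Corollary~\ref{cor-3D-tetr1} as a direct translation of Corollary~\ref{cor-3D-tetr}, using exactly the identifications you derive (the slopes $s_y=y_0$, $s_z=z_0$, and the identification of the slices $mx_L+1$ and $mx_R-n+1$ with the lattice points of the right triangles with vertices $(y_0,z_0)$, $(y_0-\tfrac{1}{x_L},z_0)$, $(y_0,z_0-\tfrac{1}{x_L})$ and $(n-1)(y_0,z_0)$, $(n-1)(y_0-\tfrac{1}{x_R},z_0)$, $(n-1)(y_0,z_0-\tfrac{1}{x_R})$, respectively, with the same floor/ceiling counts). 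Your closing caveat about the slices being cut out only by the three facets at the nearby vertex is easily settled since the neglected facet's constraint differs by a term of order $mw$, which dominates for $m$ large.
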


Note that the statements of Corollaries~\ref{cor-3D-n1}, \ref{cor-3D-tetr} and \ref{cor-3D-tetr1} 
do not depend on the assumption $0\leq \frac{y_R}{x_R}, \frac{z_R}{x_R} <1$. The three conditions are the same after applying an integral shear transformation as above.

\begin{example} \label{ex-3D-1}
Let $x_L=-3/5, x_R=6/17, y_0= 1/3, z_0=1/2$. The three conditions of Corollary~\ref{cor-3D-tetr1} are satisfied with $w = 81/85$ and $n=1$. The normal fan has rays generated by
\[ (5,-2,-2), (-2,-1,-1), (-1,3,0), (-1,0,2).\]
These vectors generate the lattice $\ZZ^3$, and $X_\Delta$ is the weighted projective space $\PP(17,20,18,27)$.
\end{example}

\begin{example} \label{ex-3D-2}
Let $x_L=-2/3, x_R=1/3, y_0= 1/2, z_0=1/2$. The three conditions are again satisfied with $w=1$ and $n=1$. The normal fan has rays generated by
\[ (5,-2,-2), (2,-3,-3), (-1,2,0), (-1,0,2).\]
These vectors generate a sublattice of index $2$ in $\ZZ^3$, and $X_\Delta$ is the quotient of $\PP(2,6,11,11)$ by a $2$-element subgroup of the torus.
\end{example}

\begin{example} \label{ex-3D-3}
Let $x_L=-5/18, x_R=5/7, y_0= 2/5, z_0=1$. Here $w = 125/126 <1$ and $n=4$.
However, 
\[ 1- \lceil (n-1)(y_0+z_0-\frac{1}{x_R})\rceil + \lfloor(n-1) y_0\rfloor + \lfloor (n-1)z_0\rfloor = 5,\]
and hence Corollary~\ref{cor-3D-tetr1} does not apply to the blowup of $X_\Delta= \PP(7,18,5,25)$.
\end{example}

\begin{remark}
Given a polytope $\Delta$, one can project it to the $xy$-plane or the $xz$-plane to get a plane $4$-gon. The slice $c$ in $m\Delta$ has size no bigger than the corresponding column $c$ in the projection. This implies that if the projection of $\Delta$ satisfies the conditions of Theorem~\ref{thm-2D} with $n=1$, then $\Delta$ satisfies the conditions in Corollary~\ref{cor-3D-n1}. Thus, one can construct $3$-dimensional polytopes by lifting $2$-dimensional polygons. However, Examples \ref{ex-3D-1} and \ref{ex-3D-2} are genuinely new: they can not be reduced to $2$-dimensional cases by projection. This can be seen as follows.
The projection of the tetrahedron to the $xy$-plane is a triangle determined by $(x_L,x_R,y_0)$. The three conditions of Theorem~\ref{thm-2D} in the case $n=1$ are:
\begin{enumerate}
\item $w=x_R-x_L \leq 1$.
\item $1= 1+\lfloor y_0-\frac{1}{x_L}\rfloor - \lceil y_0\rceil$.
\item $y_0\notin \ZZ$.
\end{enumerate}
In Examples \ref{ex-3D-1} and \ref{ex-3D-2} the second condition is not satisfied. Similarly, projecting to the $xz$-plane, the condition $1= 1+\lfloor z_0-\frac{1}{x_L}\rfloor - \lceil z_0\rceil$ is not satisfied.
\end{remark}

In \cite{GK} we gave an algorithm for checking if the blowup of a weighted projective plane satisfies the assumptions of Theorem~\ref{thm-2D}. We will state a similar result in dimension $3$.

Consider the weighted projective space $\PP(a,b,c_1,c_2)$. We say that $(e,f,g_1,g_2)\in \ZZ^4_{> 0}$ is a relation in degree $d$ if
\[ ea+fb = g_1 c_1 = g_2 c_2 = d.\]
We require for a relation $(e,f,g_1,g_2)$ that
\[ \gcd(e,f,g_1) = \gcd(e,f,g_2)=\gcd(g_1,g_2) = 1.\]
(If $x,y, z_1,z_2$ are variables of degree $a,b,c_1, c_2$ respectively, then $x^e y^f, z_1^{g_1}, z_2^{g_2}$ are three monomials of degree $d$. They correspond to the three lattice points in $\Delta$.)

\begin{theorem} \label{thm-proj3}
Let $\PP(a,b,c_1,c_2)$ be a weighted projective space with a relation $(e,f,g_1,g_2)$ in degree $d$. Then $\Bl_{t_0} \PP(a,b,c_1,c_2)$ is not a MDS if the following three conditions are satisfied:
\begin{enumerate}
\item Let 
\[ w= \frac{d^3}{abc_1 c_2}.\]
Then $w\leq 1$.
\item Consider integers $\delta_1,\delta_2 \leq 0$ such that the vector
\[ \frac{1}{g_1 g_2}(b,a)+\big(\frac{\delta_1}{g_1}+\frac{\delta_2}{g_2}\big)(e,-f)\]
has non-negative integer entries. The set of such $(\delta_1,\delta_2)$ forms a slice of size $n$. Then the integers $\gamma_1,\gamma_2 \geq 0$ such that 
\[ \frac{n-1}{g_1 g_2}(b,a)+\big(\frac{\gamma_1}{g_1}+\frac{\gamma_2}{g_2}\big)(e,-f)\]
has non-negative integer entries must also form a slice of size $n$. 
\item With $n$ as above, 
\[ \frac{n}{g_1 g_2} (b,a) \notin \ZZ^2.\]
\end{enumerate}
\end{theorem}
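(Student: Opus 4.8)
The plan is to realize $\Bl_{t_0}\PP(a,b,c_1,c_2)$ as a blowup $\Bl_{t_0}X_\Delta$ of the toric variety of a tetrahedron $\Delta$, and then to match, one by one, the three numbered hypotheses of Theorem~\ref{thm-proj3} against the three hypotheses of Corollary~\ref{cor-3D-tetr1}. Write $M=\ker\big((a,b,c_1,c_2)\colon\ZZ^4\to\ZZ\big)$ for the lattice of degree-$0$ Laurent monomials in $x,y,z_1,z_2$, and let $\Delta$ be the polytope of $\cO(d)$, whose lattice points are the degree-$d$ monomials. The definition of a relation forces $x^ey^f$, $z_1^{g_1}$, $z_2^{g_2}$ to be three such lattice points; I would place $x^ey^f$ at the origin and $z_1^{g_1},z_2^{g_2}$ at the two lattice vertices $(0,1,0),(0,0,1)$, so that the remaining two (rational) vertices $P_L,P_R$ of $\Delta$ are the pure-power directions $y^{d/b}$ and $x^{d/a}$. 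A short computation with the width functional then yields the normalization $e\,a\,x_R+f\,b\,x_L=0$, which together with the width determines the $4$-tuple $(x_L,x_R,y_0,z_0)$ defining $\Delta$; explicitly $x_R=\frac{fd^2}{ac_1c_2}$ and $x_L=-\frac{ed^2}{bc_1c_2}$.

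Condition (1) is then immediate: the normalized volume of the tetrahedron with vertices $(0,1,0),(0,0,1),P_L,P_R$ equals $x_R-x_L=w$, while on $\PP(a,b,c_1,c_2)$ it equals the top self-intersection $\cO(d)^3=d^3/(abc_1c_2)$. Hence the inequality $w\le1$ of Corollary~\ref{cor-3D-tetr1}(1) is literally condition (1) of the theorem. Condition (3) I would handle by a lattice argument: the slope vector $(y_0,z_0)$ of the edge $P_LP_R$ and the vector $\frac{1}{g_1g_2}(b,a)$ represent the same class in $(\QQ/\ZZ)^2$ up to the change of coordinates $\GL_2(\ZZ)$ identifying the slice lattice with $\ZZ^2$; since the property ``$nv\in\ZZ^2$'' is invariant under $\GL_2(\ZZ)$, the statement $n(y_0,z_0)\notin\ZZ^2$ is equivalent to $\frac{n}{g_1g_2}(b,a)\notin\ZZ^2$. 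The hypotheses $\gcd(e,f,g_1)=\gcd(e,f,g_2)=\gcd(g_1,g_2)=1$ are exactly what makes the relevant ray generators primitive and this identification an isomorphism onto $\ZZ^2$.

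The crux --- and the step I expect to be the main obstacle --- is condition (2). By the discussion preceding Corollary~\ref{cor-3D-tetr1}, the slice $mx_L+1$ is identified with the lattice points of the triangle with vertices $(y_0,z_0)$, $(y_0-\tfrac{1}{x_L},z_0)$, $(y_0,z_0-\tfrac{1}{x_L})$, and the slice $mx_R-n+1$ with those of the triangle scaled by $n-1$ about $(y_0,z_0)$ with leg $-\tfrac{1}{x_R}$. I must show that these two lattice-point sets are reproduced exactly by the parametrizations in the theorem: the integer pairs $(\delta_1,\delta_2)\le0$ for which $\frac{1}{g_1g_2}(b,a)+\big(\frac{\delta_1}{g_1}+\frac{\delta_2}{g_2}\big)(e,-f)\in\ZZ^2_{\ge0}$, and the pairs $(\gamma_1,\gamma_2)\ge0$ for which $\frac{n-1}{g_1g_2}(b,a)+\big(\frac{\gamma_1}{g_1}+\frac{\gamma_2}{g_2}\big)(e,-f)\in\ZZ^2_{\ge0}$. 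I would prove this by writing the width-one slice as an explicit coset of the rank-$2$ slice lattice inside $M$, expressing that coset in monomial exponents, and translating the requirement of non-negative integer entries into the two triangle inequalities; the sign conventions $\le0$ versus $\ge0$ correspond to approaching from the $P_L$ side versus the $P_R$ side. Once both parametrizations are shown to be triangular arrays of the same shape, ``both slices have size $n$'' becomes precisely condition (2).

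With the dictionary established and the three conditions matched, the theorem follows by invoking Corollary~\ref{cor-3D-tetr1} for $\Delta$. I anticipate that the only genuine difficulty is the bookkeeping in condition (2): keeping the several gcd's, the denominators $g_1,g_2$, and the direction $(e,-f)$ consistent through the identification of the slice lattice with $\ZZ^2$. The geometry itself --- the collinearity of $(0,0,0),P_L,P_R$, the volume formula, and the triangular shape of the slices --- is rigid, so once the coordinates are fixed correctly each condition should drop out by direct comparison.
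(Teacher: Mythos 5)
Your proposal follows essentially the same route as the paper: both identify $\Delta$ with the polytope of degree-$d$ monomials (placing $x^ey^f$ at the origin and $z_1^{g_1},z_2^{g_2}$ at the lattice vertices, with $P_L,P_R$ the pure powers $y^{d/b},x^{d/a}$), match the width $w=x_R-x_L$ with $d^3/(abc_1c_2)$, parametrize the slices $mx_L+1$ and $mx_R-n+1$ by the coset description giving the $(\delta_1,\delta_2)$ and $(\gamma_1,\gamma_2)$ conditions, and reduce condition (3) to the class of the edge direction modulo the slice lattice, then invoke Corollary~\ref{cor-3D-tetr1}. Your explicit values $x_R=\frac{fd^2}{ac_1c_2}$, $x_L=-\frac{ed^2}{bc_1c_2}$ agree with the paper's width functional $h$, so the plan is correct and matches the paper's proof in all essentials.
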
 

To check if some $\PP(a,b,c_1,c_2)$ satisfies the assumptions of the theorem, we first determine $g_1, g_2$. The conditions $g_1 c_1 = g_2 c_2$ and $\gcd(g_1,g_2) = 1$ imply that $g_1=c_2/\gcd(c_1,c_2)$, $g_2=c_1/\gcd(c_1,c_2)$. After that we check that $w\leq 1$, find $e,f$, and compute the two slices. 

Table~\ref{tab50} lists examples with $a,b,c_1,c_2 < 50$ that were found using a computer. We have omitted some isomorphic weighted projective spaces from this table. For example, $\PP(a,b,c_1,c_2)\isom \PP(da,db,dc_1,dc_2)$ for any $d>0$. Similarly, if a prime $p$ divides all numbers $a,b,c_1,c_2$ except one, we can divide the three numbers by $p$ to get isomorphic weighted projective spaces. The table lists only spaces $\PP(a,b,c_1,c_2)$ where every triple in $\{a,b,c_1,c_2\}$ has no common divisor greater than $1$. 

\begin{table}[ht]  
\begin{minipage}[b]{0.4\linewidth}\centering
\begin{tabular}{| c | c | c|}
\hline
\hline 
$\PP(a,b,c_1, c_2)$ & $(e,f,g_1,g_2)$ & $n$\\ 
\hline
$\PP(47, 13, 12, 30)$ & $(1, 1, 5, 2)$ &  $1$ \\ 
$\PP(19, 41, 15, 20)$ & $(1, 1, 4, 3)$ &  $3$ \\ 
$\PP(43, 17, 15, 20)$ & $(1, 1, 4, 3)$ &  $1$ \\ 
$\PP(26, 49, 15, 25)$ & $(1, 1, 5, 3)$ &  $3$ \\ 
$\PP(11, 32, 18, 27)$ & $(2, 1, 3, 2)$ &  $2$ \\ 
$\PP(13, 28, 18, 27)$ & $(2, 1, 3, 2)$ &  $2$ \\ 
$\PP(17, 20, 18, 27)$ & $(2, 1, 3, 2)$ &  $1$ \\ 
$\PP(47, 7, 18, 27)$ & $(1, 1, 3, 2)$ &  $1$ \\ 
$\PP(23, 44, 18, 45)$ & $(2, 1, 5, 2)$ &  $2$ \\ 
$\PP(29, 32, 18, 45)$ & $(2, 1, 5, 2)$ &  $1$ \\ 
$\PP(23, 20, 22, 33)$ & $(2, 1, 3, 2)$ &  $1$ \\ 
$\PP(25, 16, 22, 33)$ & $(2, 1, 3, 2)$ &  $1$ \\ 
$\PP(29, 20, 26, 39)$ & $(2, 1, 3, 2)$ &  $1$ \\ 
\hline
\end{tabular}
\end{minipage}
\hspace{0.5cm}
\begin{minipage}[b]{0.4\linewidth}
\centering
\begin{tabular}{|c|c|c|}
\hline
\hline 
$\PP(a,b,c_1,c_2)$ & $(e,f,g_1,g_2)$ & $n$\\ 
\hline
$\PP(31, 16, 26, 39)$ & $(2, 1, 3, 2)$ &  $1$ \\ 
$\PP(29, 50, 27, 36)$ & $(2, 1, 4, 3)$ &  $2$ \\ 
$\PP(31, 46, 27, 36)$ & $(2, 1, 4, 3)$ &  $1$ \\ 
$\PP(35, 38, 27, 36)$ & $(2, 1, 4, 3)$ &  $1$ \\ 
$\PP(43, 49, 27, 45)$ & $(2, 1, 5, 3)$ &  $1$ \\ 
$\PP(44, 47, 27, 45)$ & $(2, 1, 5, 3)$ &  $1$ \\ 
$\PP(17, 33, 28, 42)$ & $(3, 1, 3, 2)$ &  $1$ \\ 
$\PP(19, 27, 28, 42)$ & $(3, 1, 3, 2)$ &  $1$ \\ 
$\PP(37, 16, 30, 45)$ & $(2, 1, 3, 2)$ &  $1$ \\ 
$\PP(23, 27, 32, 48)$ & $(3, 1, 3, 2)$ &  $1$ \\ 
$\PP(43, 46, 33, 44)$ & $(2, 1, 4, 3)$ &  $1$ \\ 
$\PP(47, 38, 33, 44)$ & $(2, 1, 4, 3)$ &  $1$ \\ 
$\PP(49, 34, 33, 44)$ & $(2, 1, 4, 3)$ &  $1$ \\ 
\hline
\end{tabular}
\end{minipage}
\\ [2ex]
\caption{Weighted projective spaces $\PP(a,b,c_1,c_2)$, $a,b,c_1,c_2 <50$,  with relation $(e,f,g_1,g_2)$, that satisfy the conditions of Theorem~\ref{thm-proj3}. } \label{tab50}
\end{table}

Corollaries~\ref{cor-3D-n1}, \ref{cor-3D-tetr} and \ref{cor-3D-tetr1} have obvious generalizations to higher dimension. Similarly, Theorem~\ref{thm-proj3} can be generalized to dimension $r$. We need to consider weighted projective spaces $\PP(a,b,c_1,c_2,\ldots,c_{r-1})$ with a relation $(e,f,g_1,g_2,\ldots,g_{r-1})$. Wherever there is a term with $c_1$ and $c_2$ (or $g_1, g_2$) in Theorem~\ref{thm-proj3}, we need to add terms with $c_3,\ldots, c_{r-1}$ (or $g_3,\ldots,g_{r-1}$). Table~\ref{tab60} lists weighted projective $4$-spaces with $a,b,c_i < 65$. Again, only normalized numbers are listed. 

\begin{table}[ht]   \label{tab60}
\begin{minipage}[b]{0.45\linewidth}\centering
\begin{tabular}{| c | c | c|}
\hline
\hline 
$\PP(a,b,c_1, c_2, c_3)$ & $(e,f,g_1,g_2,g_3)$ & $n$\\ 
\hline
$\PP(47, 13, 12, 30, 60)$ & $(1, 1, 5, 2, 1)$ &  $1$ \\ 
$\PP(19, 11, 13, 52, 52)$ & $(1, 3, 4, 1, 1)$ &  $3$ \\ 
$\PP(21, 10, 13, 52, 52)$ & $(2, 1, 4, 1, 1)$ &  $1$ \\ 
$\PP(19, 41, 15, 20, 60)$ & $(1, 1, 4, 3, 1)$ &  $3$ \\ 
$\PP(43, 17, 15, 20, 60)$ & $(1, 1, 4, 3, 1)$ &  $1$ \\ 
$\PP(22, 7, 17, 51, 51)$ & $(2, 1, 3, 1, 1)$ &  $1$ \\ 
$\PP(11, 32, 18, 27, 54)$ & $(2, 1, 3, 2, 1)$ &  $2$ \\ 
$\PP(13, 28, 18, 27, 54)$ & $(2, 1, 3, 2, 1)$ &  $2$ \\ 
\hline
\end{tabular}
\end{minipage}
\hspace{0.5cm}
\begin{minipage}[b]{0.45\linewidth}
\centering
\begin{tabular}{|c|c|c|}
\hline
\hline 
$\PP(a,b,c_1,c_2,c_3)$ & $(e,f,g_1,g_2,g_3)$ & $n$\\ 
\hline
$\PP(17, 20, 18, 27, 54)$ & $(2, 1, 3, 2, 1)$ &  $1$ \\ 
$\PP(47, 7, 18, 27, 54)$ & $(1, 1, 3, 2, 1)$ &  $1$ \\ 
$\PP(25, 7, 19, 57, 57)$ & $(2, 1, 3, 1, 1)$ &  $1$ \\ 
$\PP(53, 7, 20, 30, 60)$ & $(1, 1, 3, 2, 1)$ &  $1$ \\ 
$\PP(15, 7, 26, 52, 52)$ & $(3, 1, 2, 1, 1)$ &  $1$ \\ 
$\PP(9, 13, 29, 58, 58)$ & $(5, 1, 2, 1, 1)$ &  $1$ \\ 
$\PP(17, 7, 29, 58, 58)$ & $(3, 1, 2, 1, 1)$ &  $1$ \\ 
$\PP(19, 7, 32, 64, 64)$ & $(3, 1, 2, 1, 1)$ &  $1$ \\ 
\hline
\end{tabular}
\end{minipage}
\\ [2ex]
\caption{Weighted projective spaces $\PP(a,b,c_1,c_2,c_3)$, $a,b,c_1,c_2,c_3 < 65$,  with relation $(e,f,g_1,g_2,g_3)$ that satisfy the conditions of Theorem~\ref{thm-proj3} in dimension $4$.}  \label{tab60}
\end{table}

\section{Proof of Theorem~\ref{thm-2D}}

We use standard notation from birational geometry. Let $N^1(X)$ (resp. $N_1(X)$) be the group of numerical equivalence classes of Cartier divisors (resp. $1$-cycles). Let $\overline{NE(X)}\subset N_1(X)_\RR$ be the closed Kleiman-Mori cone of curves, and $Nef(X)\subset N^1(X)_\RR$ the dual cone of nef divisors.

We prove Theorem~\ref{thm-2D} by contradiction. We assume that $X$ is a MDS and produce a nef divisor $D$ that is not semiample. Note that $X$ being a MDS implies that its nef cone is polyhedral, generated by a finite number of semiample divisor classes.

Let $\Delta$ be a plane $4$-gon as in the theorem. The toric variety $X_\Delta$ is $\QQ$-factorial and has Picard number $2$. The blowup $X$ has Picard number $3$. (We will deal with the case where $\Delta$ is a triangle or $w=1$ later.) The $4$-gon contains two lattice points, $(0,0)$ and $(0,1)$. Consider the irreducible curve in the torus $T$ defined by the vanishing of the binomial
\[ \chi^{(0,0)} - \chi^{(0,1)} = 1-y,\]
and let $\overline{C}\subset X_\Delta$ be its closure. Considering $\overline{C}$ as a $\QQ$-Cartier divisor in $X_\Delta$, it has class corresponding to the polygon $\Delta$. This implies that its self-intersection number is
\[ \overline{C}^2 = 2 Area(\Delta) = w.\]
If now $C$ is the strict transform of $\overline{C}$ in $X$, then $C$ has divisor class $\pi^* \overline{C}-E$, where $\pi:X\to X_\Delta$ is the blowup map and $E$ is the exceptional divisor. Hence $C^2 = w-1 <0$. This implies that $C$ defines an extremal ray in the cone $\overline{NE(X)}$ and $C^\perp$ defines a $2$-dimensional face in the $3$-dimensional nef cone of $X$. We will show that a general divisor $D \in C^\perp \cap Nef(X)$ is not semiample.

Let us start by describing the face of the nef cone defined by $C^\perp$.
A nef divisor in $X$ has the form $H-aE$, where $a\geq 0$ and $H$ is the pullback of a nef divisor in $X_\Delta$. We may assume that $a\neq 0$, and even more specifically that $a=1$. Indeed, if $a=0$ and $(H-aE)\cdot C = 0$, then also $H=0$ because $\overline{C}$ is ample on $X_\Delta$. The divisor $H$ corresponds to a convex polygon with sides parallel to the sides of $\Delta$. (The polygon may be degenerate if some side has length $0$). Let us define the width of $H$ as the width of the corresponding polygon. 

\begin{lemma}
A nef divisor $H-E$ lies in $C^\perp$ if and only if the width of $H$ is equal to $1$.
\end{lemma}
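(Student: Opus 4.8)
The plan is to reduce the lemma to a single toric intersection computation and then evaluate it via support functions. Write $H=\pi^*\overline{H}$, where $\overline{H}$ is the nef divisor on $X_\Delta$ corresponding to a polygon $P$, so that by definition the width of $H$ is the width of $P$. Since $t_0$ is a smooth point of $X_\Delta$ we have $E^2=-1$ and $\pi^*(\,\cdot\,)\cdot E=0$, so with $C=\pi^*\overline{C}-E$ the projection formula gives
\[
(H-E)\cdot C=(\pi^*\overline{H}-E)\cdot(\pi^*\overline{C}-E)=\overline{H}\cdot\overline{C}+E^2=\overline{H}\cdot\overline{C}-1 .
\]
Hence $H-E$ lies in $C^\perp$ if and only if $\overline{H}\cdot\overline{C}=1$, and the lemma follows once I establish the identity $\overline{H}\cdot\overline{C}=\operatorname{width}(P)$.

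To prove this identity I would use that $\overline{C}$ is the closure of a subtorus. Indeed $\{1-y=0\}=\{y=1\}$ is exactly the image of the one-parameter subgroup $\lambda\colon t\mapsto(t,1)$, i.e.\ the subtorus $T_u$ attached to the primitive vector $u=(1,0)\in N$. Its normalization is the complete toric curve of the one-dimensional fan $\{\RR_{\geq 0}u,\ \RR_{\leq 0}u\}$ on $\RR u$, which is $\PP^1$. Since no side of $\Delta$ is vertical (both $x_L<0$ and $x_R>0$), the vectors $u$ and $-u$ lie in the interiors of the maximal cones corresponding to the vertices $P_R$ and $P_L$, so the two torus-fixed points of this $\PP^1$ map to $x_{P_R}$ and $x_{P_L}$, which recovers the assertion that $\overline{C}$ passes through these two points. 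Because $u$ is primitive the normalization map is birational onto $\overline{C}$, so $\overline{H}\cdot\overline{C}$ equals the degree of the pullback of $\overline{H}$ to $\PP^1$.

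Finally I would compute that degree from the support function $\psi$ of the nef divisor $\overline{H}=D_P$, normalized by $\psi(v)=\min_{m\in P}\langle m,v\rangle$. Restricting $\psi$ to $\RR u$ presents $\mathcal{O}(\overline{H})|_{\PP^1}$ as a toric line bundle on $\PP^1$ of degree
\[
-\psi(u)-\psi(-u)=-\min_{m\in P}\langle m,u\rangle+\max_{m\in P}\langle m,u\rangle=\operatorname{width}_{(1,0)}(P),
\]
the horizontal extent of $P$, which is exactly the width of $H$. As a consistency check, taking $\overline{H}=\overline{C}$ (so $P=\Delta$) recovers $\overline{C}^2=x_R-x_L=w$, in agreement with $[\overline{C}]$ being the class of $\Delta$.

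The main point requiring care is that $X_\Delta$ is singular at $x_{P_L}$ and $x_{P_R}$, so $\overline{C}$ and $\overline{H}$ are only $\QQ$-Cartier; I would therefore phrase the degree computation on the normalization $\overline{C}^{\nu}\cong\PP^1$, where $\nu^*\overline{H}$ is an honest $\QQ$-divisor, and invoke $\QQ$-linearity of the toric support-function degree formula. Everything else is formal once the identification of $\overline{C}$ with a subtorus closure is in hand; that identification, together with the restricted support-function degree formula, is the crux of the argument.
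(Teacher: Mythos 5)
Your proof is correct, but it computes the crucial intersection number by a genuinely different route than the paper. Both arguments share the same skeleton: reduce $(H-E)\cdot C=0$ to the statement $\overline{H}\cdot\overline{C}=\operatorname{width}$, using $E^2=-1$ and $\pi^*(\cdot)\cdot E=0$. The difference is in how that number is evaluated. The paper picks an explicit effective representative of the class $m(H-E)$, namely $D=\pi^*\overline{D}-mE$ with $\overline{D}$ the closure of $V(\chi^{Q_L}-\chi^{Q_R})$, reduces the binomial to the form $1-x^iy^j$, and literally counts the points of $V(1-x^iy^j)\cap V(1-y)$ in the torus ($i$ reduced points, $i/m$ the width); no toric machinery beyond the identification of the class of $\overline{D}$ is needed. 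You instead work with the class $\overline{H}$ intrinsically: you recognize $\overline{C}$ as the closure of the one-parameter subgroup attached to $u=(1,0)$, pass to its normalization $\PP^1$, and read off $\deg\nu^*\overline{H}=-\psi(u)-\psi(-u)=\max_{m\in P}\langle m,u\rangle-\min_{m\in P}\langle m,u\rangle$ from the support function. Your approach buys generality and cleanliness: it avoids choosing a representative, avoids the (implicit in the paper) verification that $\overline{D}$ lies in the class $m\overline{H}$, handles the $\QQ$-Cartier issues transparently on the normalization, and is exactly the kind of argument that extends to the three-dimensional situation later in the paper, where the authors likewise switch to intersection-theoretic computations ($H^3=6\Vol(\Delta)$). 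The paper's approach buys elementarity: it is a bare-hands count of solutions of binomial equations, with no appeal to support functions or degrees of toric line bundles.

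One small caveat: you mix normal-fan conventions. With your normalization $\psi(v)=\min_{m\in P}\langle m,v\rangle$ (inner normal fan), the vector $u=(1,0)$ lies in the interior of the maximal cone dual to the \emph{left} vertex, so the two ends of $\PP^1$ land on $x_{P_L}$ and $x_{P_R}$ in the opposite way from what you state (the paper's figure uses the outer normal fan, which is presumably the source of the flip). This is harmless: the degree formula $-\psi(u)-\psi(-u)$ is symmetric under $u\leftrightarrow -u$, and which end maps to which fixed point plays no role in the computation.
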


\begin{proof}
Let $\Delta'$ be the polygon corresponding to $H$ and let $m>0$ be such that $m\Delta'$ is integral. Denote by $Q_L$ and $Q_R$ the left and right vertices of $m\Delta'$ (which are necessarily distinct). Consider the divisor in $T$ defined by the vanishing of  
\[ \chi^{Q_L} - \chi^{Q_R}.\] 
Let $\overline{D}$ be its closure in $X_\Delta$ and let $D=\pi^*\overline{D}-mE$ in $X$. Then $D$ has class $m(H-E)$.

Let us compute the intersection number $\overline{D}\cdot \overline{C}$. The two curves intersect only in the torus $T$. We may multiply the equation $\chi^{Q_L} - \chi^{Q_R}$ with $\chi^{-Q_L}$ to put it in the form $1-x^i y^j$. Here $i/m$ is the width of the polygon $\Delta'$. Now the intersection
\[ V(1-x^i y^j) \cap V(1-y)\]
has $i$ points with multiplicity $1$. This implies that 
\[ D\cdot C  = \overline{D}\cdot \overline{C} + m E\cdot E  = i-m,\]
which is zero if and only if $i=m$.
\end{proof}

Let now $D$ be a general nef $\QQ$-divisor on $X$ in the class $H-E$, where $H$ is defined by a polygon $\Delta'$ of width $1$. Since $D$ is a general divisor on the $2$-dimensional face of $Nef(X)$, we may assume that $\Delta'$ is a $4$-gon. We wish to show that $D$ is not semiample. More precisely, we show that for any $m$ sufficiently large and divisible, all global sections of $\cO_X(m D)$ vanish at the $T$-fixed point corresponding to the left vertex $P_L$. 

Let $m>0$ be an integer such that $m\Delta'$ is integral. Let $Q_L, Q_R$ be the left and right vertices of $\Delta'$. Global sections of $\cO_X(m D)$ have the form 
\[ f= \sum_{q\in m\Delta'} a_q \chi^q  \qquad a_q\in k, \text{ $f$ vanishes to order at least $m$ at $t_0$}.\]
Such a global section $f$ vanishes at the $T$-fixed point corresponding to $P_L$ if and only if $a_{m Q_L}=0$. The condition that  $f$ vanishes to order at least $m$ at $t_0$ can be expressed by saying that all partial derivatives of $f$ up to order $m-1$ vanish at the point $t_0 = (1,1)$. Now the vanishing of the coefficient  $a_{m Q_L}$ is equivalent to the existence of a partial derivative $\cD$ of order at most $m-1$ such that for $q\in m\Delta'$
\[ \cD(\chi^q)|_{t_0} = \begin{cases}
0 & \text{if $q\neq m Q_L$,}\\
c\neq 0 & \text{if $q = m Q_L$.}
\end{cases} \]

As in \cite{GK}, it is enough to find such a derivative $\cD$ after an integral translation of $m\Delta'$ (which corresponds to multiplication of $f$ with a monomial).
We translate $m\Delta'$ so that its right vertex $m Q_R$ has coordinates $(m-2,0)$. Then its left vertex $mQ_L$ has coordinates $(-2,\beta)$ for some $\beta\in\ZZ$. We choose $\cD$ of the form 
\[ \cD = \partial_x^{m-n-1} \tilde{\cD},\]
where $\tilde{\cD}$ has order at most $n$. Note that $\partial_x^{m-n-1}$ vanishes when applied to monomials $\chi^q= x^i y^j$, $0\leq i < m-n-1$. After applying $\partial_x^{m-n-1}$ to the monomials $\chi^q$, $q\in m\Delta'$, the  results with nonzero coefficients can be divided into three sets:
\begin{align*}
 S_1 &= \{ x^{-A-1} y^\beta\},\\
 S_2 &= \{ x^{-A} y^{B+j}\}_{j=0,\ldots,n-1},\\
 S_3 &= \{ x^i y^j\}_{i,j\geq0, i+j<n}.
 \end{align*}
 Here $\beta$ is as above, $A=m-n$ and $B\in\ZZ$. We used here conditions $0\leq s_2 < 1$ and (2a) of Theorem~\ref{thm-2D} to describe the set $S_3$. It is shown in Lemma~\ref{lem-deriv-2D} below that up to a nonzero constant factor there is a unique partial derivative $\tilde{\cD}$ of degree $n$ that vanishes on monomials in $S_2$ and $S_3$ when evaluated at $t_0$. When applied to the monomial in $S_1$, its value at $t_0$ is
 \[ (\beta-B-1)(\beta-B-2)\cdots (\beta-B-n+1)(\beta-B-\frac{n B}{A}).\]
 We need to check when this expression is nonzero. The condition $\beta\neq B+j$, $j=1,\ldots,n-1$ is precisely condition (2b) in Theorem~\ref{thm-2D}. (Notice that condition (2) of Theorem~\ref{thm-2D} only depends on the configuration of lattice points near the vertices $mP_l, mP_R$. The condition does not change if we replace $m\Delta$ with $m\Delta'$ or its translation.) 
We claim that the condition $\beta- B-\frac{n B}{A} \neq 0$ can always be satisfied by choosing the divisor $D$ general. Indeed, 
first notice that replacing $m$ by any of its positive multiples preserves the hypothesis of the theorem. 
We can vary $D$ in the $2$-dimensional face of the nef cone by moving the left vertex of $\Delta'$ up or down. 
For $m$ fixed,   
this deformation changes both $\beta$ and $B$ by the same amount and leaves $A$ fixed. 
We can then choose $m$ sufficiently divisible and a new $D$  in the $2$-dimensional face of the nef cone such that $m\Delta'$ is integral and $\beta- B-\frac{n B}{A} \neq 0$.
This finishes the proof of the first half of Theorem~\ref{thm-2D}.

Consider now the second half of Theorem~\ref{thm-2D}, where $w=1$ or $\Delta$ is a triangle. If $w=1$, then the curve $C$ as above has $C^2=0$. This implies that $C$ lies on the boundary of the cone $\overline{NE(X)}$, but may not define an extremal ray. 
If $C$ spans an extremal ray of $\overline{NE(X)}$ we obtain the desired conclusion proceeding as before. Hence we assume that
$C^\perp\cap Nef(X)$ is a $1$-dimensional face of the nef cone. Since $C$ itself is nef, this $1$-dimensional face must be generated by $C$, hence $D=C$. This means that in the proof above we need to use $\Delta'=\Delta$ and we can not deform it. That gives us the extra condition $\beta- B-\frac{n B}{A} \neq 0$. This condition with $A=m-n$, $\beta = my_L-my_R$ and $B=b-my_R$ is precisely condition (3) in Theorem~\ref{thm-2D}. 

In the case of a triangle, $X$ has Picard number two. For any $w\leq 1$, $C$ spans an extremal ray of $\overline{NE(X)}$ and $D=\frac{1}{w}\pi^*\overline{C} - E$ spans an extremal ray of $Nef(X)$. Thus, we use $\Delta' = \frac{1}{w} \Delta$ and  Condition (3) of the theorem again gives non-vanishing of  $\beta- B-\frac{n B}{A}$.

 \section{Non-vanishing derivatives.}
 
 In this section we prove the claim about the existence of the derivative $\tilde{\cD}$ made in the last section and then generalize this result to dimension $3$.

\begin{lemma} \label{lem-deriv-2D}
Let $A,B,\beta, n \in\ZZ$, $A>0$, $n>0$. Consider three sets of monomials
\begin{align*}
 S_1 &= \{ x^{-A-1} y^\beta\},\\
 S_2 &= \{ x^{-A} y^{B+j}\}_{j=0,\ldots,n-1},\\
 S_3 &= \{ x^iy^j\}_{i,j\geq0, i+j<n}.
 \end{align*}
There exists a nonzero partial derivative $\tilde{\cD}$ of degree $n$ such that $\tilde{\cD}$ applied to monomials in $S_2$ and $S_3$ vanishes at $t_0=(1,1)$. This derivative is unique up to a constant factor. The derivative $\tilde{\cD}$ applied to the monomial in $S_1$ and evaluated at $t_0$ is
 \[ (\beta-B-1)(\beta-B-2)\cdots (\beta-B-n+1)(\beta-B-\frac{n B}{A}).\]
\end{lemma}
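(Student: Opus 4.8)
The plan is to recast the lemma as a polynomial interpolation problem. A constant-coefficient operator of order at most $n$, $\tilde{\cD} = \sum_{a+b\le n} c_{ab}\,\partial_x^a\partial_y^b$, acts on a monomial by $\tilde{\cD}(x^iy^j)|_{t_0} = \sum_{a+b\le n} c_{ab}\,(i)_a (j)_b$, where $(i)_a = i(i-1)\cdots(i-a+1)$ is the falling factorial. Setting $F(i,j) = \tilde{\cD}(x^iy^j)|_{t_0}$ produces a polynomial of total degree $\le n$, and since the products $(i)_a(j)_b$ have linearly independent leading terms $i^aj^b$, the assignment $\tilde{\cD}\mapsto F$ is a linear isomorphism onto the space of polynomials of degree $\le n$. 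Under this dictionary ``$\tilde{\cD}$ kills $\chi^q$ at $t_0$'' becomes ``$F$ vanishes at the exponent point $q$'', so the entire lemma is the assertion that there is a unique (up to scale) degree-$\le n$ polynomial $F$ vanishing on the lattice points of $S_2$ and $S_3$, together with a formula for its value at the point of $S_1$.

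First I would pin down the polynomials vanishing on $S_3=\{(i,j): i,j\ge 0,\ i+j<n\}$. The key observation is that every product $(i)_r(j)_{n-r}$ vanishes on $S_3$: at a point with $i+j<n$ one cannot simultaneously have $r\le i$ and $n-r\le j$, so one of the two falling factorials is zero. This exhibits an explicit $(n+1)$-dimensional space $W=\{\sum_{r=0}^n c_r (i)_r(j)_{n-r}\}$ of such polynomials. A short interpolation argument (for each $p\in S_3$ one writes down a degree-$(n-1)$ polynomial vanishing on $S_3\setminus\{p\}$ but not at $p$, built from the linear factors $i-a$, $j-b$, and $i+j-k$) shows that $S_3$ imposes independent conditions, so the space of degree-$\le n$ polynomials vanishing on $S_3$ has dimension exactly $n+1$ and therefore equals $W$. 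Thus any admissible $F$ has the form $F=\sum_r c_r (i)_r(j)_{n-r}$.

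Next I would impose the $S_2$ conditions. Viewing $\Phi(t):=F(-A,t)=\sum_r c_r (-A)_r (t)_{n-r}$ as a polynomial in $t$, its top term comes only from $r=0$, so $\Phi$ has degree $n$ with leading coefficient $c_0$; the $n$ conditions say $\Phi(B)=\cdots=\Phi(B+n-1)=0$. Hence $\Phi(t)=c_0\prod_{j=0}^{n-1}(t-B-j)$, and (since $A>0$ makes every $(-A)_r$ nonzero) the scalars $\lambda_r:=c_r(-A)_r$ are forced to be the falling-factorial coefficients of this product. This simultaneously gives existence, shows the solution is unique up to the scalar $c_0$ (which I normalize to $1$, so $\Phi=P:=\prod_{j=0}^{n-1}(t-B-j)$), and produces $F\neq 0$.

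Finally I would evaluate $F$ at the point $(-A-1,\beta)$ of $S_1$. Using $(-A-1)_r=\tfrac{A+r}{A}(-A)_r$ one gets the clean identity $F(-A-1,t)=P(t)+\tfrac1A\sum_r r\lambda_r (t)_{n-r}$. The remaining sum is handled by the finite-difference identity $\sum_m m\,a_m (t)_m = t\big(P(t)-P(t-1)\big)$ for $P=\sum_m a_m (t)_m$ (checked on each basis element via $t\big((t)_m-(t-1)_m\big)=m(t)_m$), which yields $F(-A-1,t)=\tfrac{A+n}{A}P(t)-\tfrac tA\big(P(t)-P(t-1)\big)$. Writing $P(\beta)=(\beta-B)Q$ and $P(\beta-1)=(\beta-B-n)Q$ with $Q=\prod_{j=1}^{n-1}(\beta-B-j)$ makes $P(\beta)-P(\beta-1)=nQ$, and substituting collapses the expression to $Q\,(\beta-B-\tfrac{nB}{A})$, exactly the claimed product. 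I expect this last evaluation --- getting the falling-factorial bookkeeping and the difference identity exactly right, so that the root $\beta-B$ of $P$ gets replaced by $\beta-B-\tfrac{nB}{A}$ --- to be the main obstacle; the reduction to interpolation and the identification of $W$ are comparatively routine.
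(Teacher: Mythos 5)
Your proposal is correct and takes essentially the same route as the paper: both reduce the lemma to finding a degree-$n$ plane curve through the exponent points of $S_2\cup S_3$ (you via ordinary derivatives and falling factorials, the paper via logarithmic derivatives $x\partial_x,y\partial_y$), both identify the solutions vanishing on $S_3$ as $\sum_r c_r [X]_{n-r}[Y]_r$, pin the polynomial down on the line $X=-A$ to be $[Y-B]_n$, and evaluate at $X=-A-1$ through the identical three-term identity $A\,p(-A-1,Y)=(A+n-Y)p(-A,Y)+Y\,p(-A,Y-1)$. The only differences are cosmetic: your finite-difference lemma is the paper's elimination step in disguise, and you supply an interpolation argument for the dimension of the $S_3$-vanishing space that the paper asserts without proof.
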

 
\begin{proof}

It was noted by Castravet \cite{Castravet1} that the existence of such a partial derivative $\tilde{\cD}$ is equivalent to the existence of a plane curve of degree $n$ that passes through the lattice points $(a,b)$ for $x^a y^b \in S_2 \cup S_3$. Indeed, we may replace partial derivatives $\partial_x, \partial_y$ with logarithmic partial derivatives $x\partial_x, y\partial_y$. Now if $p(X,Y)$ is a polynomial, then 
\[ p(x\partial_x, y\partial_y) (x^ay^b)|_{t_0} = p(a,b).\]
Instead of the derivative $\tilde{\cD}$ we will construct such a polynomial $p(X,Y)$.

We use the notation 
\[ [X]_i = X(X-1)\cdots (X-i+1).\]
The general degree $n$ polynomial that vanishes at $(a,b)$ for all $x^ay^b\in S_3$ has the form 
\[ p(X,Y) = \sum_{i=0}^n c_i [X]_{n-i} [Y]_{i}\]
for $c_i \in k$. We need that $p(a,b)$ also vanishes when $x^ay^b\in S_2$. This means that, up to a constant factor
\[ p(-A, Y) = [Y-B]_n.\]
Note that $[-A]_{n-i} [Y]_{i}$ for $i=0,\ldots, n$ form a basis for the space of all polynomials in $Y$ of degree at most $n$. It follows that we can solve for $c_i$ uniquely from this equation. However, we can find $p(-A-1,Y)$ without solving for $c_i$.

Let us evaluate $p(X,Y)$ at $X=-A-1$.
\[ p(-A-1, Y) = \sum_i c_i [-A]_{n-i} \frac{A+n-i}{A} [Y]_{i} = \frac{A+n}{A}p(-A,Y) - \frac{1}{A} \sum_i i c_i [-A]_{n-i} [Y]_{i}.\]
Similarly we find 
\[ p(-A, Y-1) = \sum_i c_i [-A]_{n-i} [Y]_{i} \frac{Y-i}{Y} = p(-A,Y) - \frac{1}{Y} \sum_i i c_i [-A]_{n-i} [Y]_{i}.\]
We can eliminate the sums in the two expressions to get 
\begin{align*}
 A p(-A-1,Y) &= (A+n-Y) p(-A, Y) + Y p(-A, Y-1)\\
 &= (A+n-Y) [Y-B]_n + Y [Y-B-1]_n \\
 &= [Y-B-1]_{n-1}(YA-AB-n B).
 \end{align*}
Dividing both sides by $A$ and substituting $Y=\beta$ gives the result.
\end{proof}

Let us now generalize the previous lemma to dimension $3$. Consider three sets of lattice points
\begin{align*}
 T_1 &= \{ (-A-1,\beta, \gamma)\},\\
 T_2 &= \{ (-A, B+i, C+j)\}_{i,j\geq 0, i+j<n},\\
 T_3 &= \{ (l, i,  j) \}_{l, i,j\geq 0, l+i+j <n},
 \end{align*}
for some $A,B,C, \beta,\gamma, n \in\ZZ$, $A>0$, $n>0$. We want to find a degree $n$ polynomial $p(X,Y,Z)$ that vanishes on $T_2$ and $T_3$, but not on $T_1$.

The general polynomial that vanishes on $T_3$ has the form
\begin{equation}\label{eq-form} 
  p(X,Y,Z) = \sum_{i,j\geq 0; i+j\leq n} c_{ij} [X]_{n-i-j} [Y]_i [Z]_j.
 \end{equation} 
As before we find
\[ p(-A-1, Y, Z) = \frac{A+n}{A}p(-A,Y,Z) - \frac{1}{A} \sum_{i,j} i c_{ij} [-A]_{n-i-j} [Y]_{i} [Z]_j - \frac{1}{A} \sum_{i,j} j c_{ij} [-A]_{n-i-j} [Y]_{i} [Z]_j,\]
\[ p(-A, Y-1,Z ) = p(-A,Y,Z) - \frac{1}{Y} \sum_{i,j} i c_{ij} [-A]_{n-i-j} [Y]_{i} [Z]_j,\]
\[ p(-A, Y,Z-1 ) = p(-A,Y,Z) - \frac{1}{Z} \sum_{i,j} j c_{ij} [-A]_{n-i-j} [Y]_{i} [Z]_j.\]
Eliminating the sums from the three equations we get
\[ A p(-A-1,Y,Z) = (A+n-Y-Z) p(-A, Y, Z) + Y p(-A, Y-1, Z) + Z p(-A, Y, Z-1).\]

The polynomial $p(X,Y,Z)$ must vanish at points $(-A, Y,Z)\in T_2$. There is an $(n+1)$-dimensional space of degree $n$ polynomials in $Y,Z$ that vanish at these points. A basis for this space  is given by $[Y-B]_d [Z-C]_{n-d}$, $d=0,\ldots, n$. 
Let $p=p_d$ be a polynomial as in (\ref{eq-form}) with the coefficients $c_{ij}$ chosen
such that 
\[ p_d(-A,Y,Z) = [Y-B]_d [Z-C]_{n-d}.\]
When $d=n$, we get the polynomial from the $2$-dimensional case $p_n(-A,Y,Z) = [Y-B]_n$, which at $X=-A-1$ is
\[ p_n(-A-1,Y,Z) =  [Y-B-1]_{n-1}(Y-B-\frac{n B}{A}).\]
Similarly, the polynomial $p_0$ satisfies 
\[ p_0(-A-1,Y,Z) =  [Z-C-1]_{n-1}(Z-C-\frac{n C}{A}).\]
For $0<d<n$ we can express
\begin{gather*}
 A p_d(-A-1,Y,Z) = (A+n-Y-Z) p_d(-A, Y, Z) + Y p_d(-A, Y-1, Z) + Z p_d(-A, Y, Z-1)\\
 = (A+n-Y-Z) [Y-B]_d [Z-C]_{n-d} +Y [Y-B-1]_d [Z-C]_{n-d} +Z[Y-B]_d [Z-C-1]_{n-d}\\
 = [Y-B-1]_{d-1} [Z-C-1]_{n-d-1} \big((A+n-Y-Z)(Y-B)(Z-C) \\ + Y(Y-B-d)(Z-C) +Z(Y-B)(Z-C-(n-d)) \big).
 \end{gather*}
Let us change variables to $\bY = Y-B$, $\bZ = Z-C$. The polynomials $p_d(-A-1, Y, Z)$ can then be simplified to
\begin{align*}
p_0(-A-1,Y,Z) &= [\bZ-1]_{n-1}(\bZ-\frac{n C}{A}),\\
p_n(-A-1,Y,Z) &= [\bY-1]_{n-1}(\bY-\frac{n B}{A}),\\
p_d(-A-1,Y,Z) &= [\bY-1]_{d-1} [\bZ-1]_{n-d-1} \left(\bY \bZ - \frac{d B}{A}\bZ - \frac{(n-d) C}{A}\bY\right)  \\
&= [\bY-1]_{d-1} [\bZ-1]_{n-d-1} \left(\frac{d}{n}\bZ(\bY-\frac{n B}{A})+ \frac{n-d}{n}\bY(\bZ-\frac{n C}{A})  \right), \ 0<d<n.
\end{align*}
Let $\bbeta = \beta-B$, $\bgamma = \gamma-C$, where $(-A-1,\beta,\gamma)$ is the point in $T_1$. We need to determine when $p_d(-A-1,\beta,\gamma)$ does not vanish for some $d$.

\begin{lemma}\label{lem-deriv-3D}
There exists $0\leq d\leq n$ such that $p_d(-A-1,\beta,\gamma)$ does not vanish if and only if the following conditions hold:
\begin{enumerate}
\item $(\bbeta,\bgamma) \neq (i,j)$ for any $i,j\geq 1$, $i+j<n$.
\item $(\bbeta, \bgamma) \neq (\frac{n B}{A},\frac{n C}{A})$.
\item 
\begin{enumerate}
\item If $\bbeta=0$ and $0 < \bgamma <n$, then $B\neq 0$.
\item If $\bgamma=0$ and $0 < \bbeta <n$, then $C\neq 0$.
\item If $\bbeta+\bgamma=n$ and $0<\bbeta,\bgamma<n$, then $B+C\neq A$.
\end{enumerate}
\end{enumerate}
\end{lemma}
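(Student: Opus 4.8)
The plan is to prove the biconditional by determining \emph{exactly} when all of the values $p_d(-A-1,\beta,\gamma)$, $0\le d\le n$, vanish; conditions (1)--(3) will turn out to describe precisely the complement of this locus. Throughout I evaluate with $\bY=\bbeta$, $\bZ=\bgamma$, and use that $\bbeta,\bgamma\in\ZZ$ since $\beta,\gamma,B,C\in\ZZ$.

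I would first exploit the two extreme polynomials $p_0$ and $p_n$, whose values are the explicit products $[\bgamma-1]_{n-1}(\bgamma-\tfrac{nC}{A})$ and $[\bbeta-1]_{n-1}(\bbeta-\tfrac{nB}{A})$. Each vanishes exactly when its argument lies in $\{1,\dots,n-1\}$ or equals the ``special value'' $\tfrac{nC}{A}$, respectively $\tfrac{nB}{A}$. Hence if $\bbeta\notin\{1,\dots,n-1\}\cup\{\tfrac{nB}{A}\}$ then $p_n\ne0$, and symmetrically for $p_0$; in either situation one checks directly that (1)--(3) hold (the vacuous hypotheses of (3) make this immediate). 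This reduces everything to the \emph{core case} $\bbeta\in\{1,\dots,n-1\}\cup\{\tfrac{nB}{A}\}$ and $\bgamma\in\{1,\dots,n-1\}\cup\{\tfrac{nC}{A}\}$, where both extreme polynomials vanish and we must examine the interior derivatives.

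For $0<d<n$ I would analyze $p_d=[\bbeta-1]_{d-1}\,[\bgamma-1]_{n-d-1}\,L_d$ factor by factor. The two falling factorials are nonzero exactly for $d$ in an explicit ``window'': when $\bbeta,\bgamma\ge1$ this window is $n-\bgamma\le d\le\bbeta$, which is nonempty iff $\bbeta+\bgamma\ge n$ and is \emph{empty} precisely on the forbidden lattice points $i+j<n$ of condition~(1). The remaining factor rewrites as the affine function $L_d=\tfrac{d}{A}(\bbeta C-\bgamma B)+\bbeta\bgamma-\tfrac{n\bbeta C}{A}$, whose slope is proportional to $\bbeta C-\bgamma B$. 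When the window contains at least two integers (i.e.\ $\bbeta+\bgamma>n$) a nonconstant $L_d$ must be nonzero at some admissible $d$, while the constant-and-vanishing case is pinned down to the coincidence $(\bbeta,\bgamma)=(\tfrac{nB}{A},\tfrac{nC}{A})$ excluded by condition~(2). The remaining off-axis core sub-cases, where one coordinate equals a special value, are handled by evaluating $L_d$ at $d=1$ or $d=n-1$, where it simplifies to a nonzero multiple of $\bbeta-\tfrac{nB}{A}$ (or of $\bgamma-\tfrac{nC}{A}$) by condition~(2).

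The crux, and the step I expect to be the main obstacle, is the boundary analysis producing condition~(3). Exactly on the three lines $\bbeta+\bgamma=n$, $\bbeta=0$, and $\bgamma=0$ the window collapses to a single $d$ or forces $L_d$ to be proportional to $d$, so the generic ``two points determine a line'' argument is unavailable and one must evaluate $L_d$ exactly. Using the defining relation of each line the value collapses to a clean monomial: for $\bbeta+\bgamma=n$ one finds $L_{\bbeta}=\tfrac{\bbeta\bgamma}{A}(A-B-C)$, nonzero iff $B+C\ne A$, which is (3c); on $\bgamma=0$ one gets $L_d=\tfrac{\bbeta C}{A}(d-n)$ and on $\bbeta=0$ one gets $L_d=-\tfrac{d\bgamma B}{A}$, nonzero iff $C\ne0$, resp.\ $B\ne0$, giving (3b) and (3a). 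The delicate bookkeeping is to organize the overlaps between ``$\bbeta\in\{1,\dots,n-1\}$'' and ``$\bbeta=\tfrac{nB}{A}$'' (and their mirrors) so that (1) and (2) remove exactly the empty-window and degenerate-coincidence cases, while (3) governs the three critical lines; the symmetry $Y\leftrightarrow Z$, $B\leftrightarrow C$, $d\leftrightarrow n-d$ halves this labor by deriving (3b) from (3a).
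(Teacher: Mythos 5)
Your proposal is correct and follows essentially the same route as the paper's proof: a case analysis on $(\bbeta,\bgamma)$ driven by the factorization of $p_d$ into falling factorials (your ``window,'' empty exactly on the condition-(1) triangle) and the affine-in-$d$ last part (forced to vanish identically only at the condition-(2) point off the critical lines, and evaluated explicitly on $\bbeta=0$, $\bgamma=0$, $\bbeta+\bgamma=n$ to produce (3a)--(3c), e.g.\ $L_{\bbeta}=\tfrac{\bbeta\bgamma}{A}(A-B-C)$ on the anti-diagonal, exactly as in the paper). The paper merely organizes the same computations as a direct five-way split on the position of $(\bbeta,\bgamma)$ instead of first reducing via $p_0,p_n$ to your core case, so the difference is bookkeeping, not substance.
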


\begin{proof}
Let us call $[\bY-1]_{d-1} [\bZ-1]_{n-d-1}$ the first part of $p_d$ and the remainder the last part. Similarly for $p_0$ and $p_n$.

Consider cases:
\begin{itemize}
\item $0<\bbeta,\bgamma$, $\bbeta+\bgamma <n$. Then the first part of every $p_d$ vanishes at $(\bbeta,\bgamma)$.
\item $(\bbeta, \bgamma) = (\frac{n B}{A},\frac{n C}{A})$. Then the last part of every $p_d$ vanishes at $(\bbeta,\bgamma)$.
\item $\bbeta=0$ and $0 < \bgamma <n$. If every $p_d$ vanishes at $(\bbeta,\bgamma)$, then in particular $p_n$ vanishes, which implies $B=0$. Conversely, if $B=0$, then every $p_d$ vanishes. Similar argument applies to the case 
$\bgamma=0$ and $0 < \bbeta <n$.
\item $\bbeta+\bgamma=n$ and $0<\bbeta,\bgamma<n$. Let $\bbeta=d>0$, $\bgamma=n-d>0$. Then $p_d$ is the only polynomial whose first part does not vanish at $(\bbeta,\bgamma)$. The last part of $p_d$ vanishes if and only if $B+C=A$.
\item All other $(\bbeta,\bgamma)$. There exist two different $d$ such that the first part of $p_d$ does not vanish at  $(\bbeta,\bgamma)$. If both last parts vanish at $(\bbeta,\bgamma)$ then $(\bbeta, \bgamma) =(\frac{n B}{A},\frac{n C}{A})$.
\end{itemize} 
\end{proof}

\section{Proofs in dimension $3$.}

We start with the proof of Theorem~\ref{thm-3D}.

Let $\Delta$ be the polytope in Theorem~\ref{thm-3D}. The variety $X_\Delta$ is not $\QQ$-factorial and has Picard number $1$. (To see the Picard number, consider deformations of the polytope by moving facets in the normal direction. We may keep one vertex, say the origin, fixed and move the remaining two facets. There is a one parameter family of such deformations, given by moving the vertex $(0,1,0)$ along the $y$-axis.)
 Let $H$ be the class of the $\QQ$-Cartier divisor corresponding to the polytope $\Delta$. Then $H$ generates $\Pic(X_\Delta)_\RR$. The space $\Pic(X)_\RR = N^1(X)$  is generated by (the pullback of) $H$ and the class $E$ of the exceptional divisor.

We construct a curve $C \subset X$ that is analogous to a curve of negative self-intersection on a surface. The polytope $\Delta$ contains $3$ lattice points $(0,0,0)$, $(0,1,0)$ and $(0,0,1)$. Consider two surfaces in the torus $T$ defined by the vanishing of
\[ \chi^{(0,0,0)} - \chi^{(0,1,0)} = 1-y,\]
\[ \chi^{(0,0,0)} - \chi^{(0,0,1)} = 1-z,\]
 and let $\bar{S}_1, \bar{S}_2$ be their closures in $X_\Delta$. Then $\bar{S}_1$ and $\bar{S}_2$ are both $\QQ$-Cartier divisors in the class $H$. Let $\bar{C}$ be their intersection.  

\begin{lemma}
$\bar{C}$ is an irreducible curve.
\end{lemma}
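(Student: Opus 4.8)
The plan is to show that $\bar C = \bar S_1 \cap \bar S_2$ is irreducible by reducing the question to the corresponding statement inside the torus $T$, where $\bar S_1, \bar S_2$ are cut out by the explicit binomials $1-y$ and $1-z$. First I would observe that $\bar C$ is the closure in $X_\Delta$ of the torus curve $C^\circ = V(1-y)\cap V(1-z)\subset T$, together possibly with components contained in the boundary $X_\Delta \setminus T$. Since $\bar S_1, \bar S_2$ are irreducible (being closures of irreducible torus hypersurfaces) and each has class $H$, the intersection $\bar C$ is a curve; the content of the lemma is that it has no extra boundary components and that its torus part is irreducible.

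The key computation is to identify $C^\circ$ set-theoretically. On $T = (k^*)^3$ with coordinates $x,y,z$, the equations $1-y=0$ and $1-z=0$ force $y=z=1$, so $C^\circ = \{(x,1,1): x\in k^*\}$, which is visibly isomorphic to $k^*$ and hence irreducible. Thus $C^\circ$ is a one-dimensional torus orbit direction, parametrized by the first coordinate, and its Zariski closure $\bar C$ in the projective toric variety $X_\Delta$ is an irreducible curve: the closure of an irreducible set is irreducible. So the only thing that could go wrong is that the scheme-theoretic intersection $\bar S_1 \cap \bar S_2$ might acquire additional lower-dimensional-in-$T$ components supported on the toric boundary, i.e. that $\bar S_1 \cap \bar S_2 \neq \bar C$ as sets.

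The main obstacle, therefore, is controlling the behavior along the boundary divisors of $X_\Delta$. To handle this I would work chart by chart on the affine toric pieces $U_\sigma$ corresponding to the maximal cones $\sigma$ of the normal fan, whose rays are recorded in the normal fan data for $\Delta$ (the vertices $(0,0,0),(0,1,0),(0,0,1),P_L,P_R$). On each chart I would write $\bar S_1$ and $\bar S_2$ in the local coordinates given by the semigroup $S_\sigma = \sigma^\vee \cap M$, track which monomials $\chi^{(0,0,0)}, \chi^{(0,1,0)}, \chi^{(0,0,1)}$ extend regularly, and check that the two local equations meet only in the one-dimensional locus that is the closure of $C^\circ$. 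The crucial geometric input is that $C^\circ$ runs in the $x$-direction, so its closure meets exactly the two torus-fixed points corresponding to the vertices $P_L$ and $P_R$ (the endpoints of the edge of $\Delta$ along which the first coordinate varies), and no spurious vertical components appear. Concretely, in each boundary chart one of $1-y$, $1-z$ restricts to a unit or to an equation transverse to the other, so their common vanishing stays one-dimensional.

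Once the charts are checked, I would conclude that $\bar S_1\cap \bar S_2 = \bar C$ as a set, and since $\bar C$ is the closure of the irreducible curve $C^\circ$, it is irreducible. I expect the bookkeeping in the boundary charts to be the genuinely delicate part, especially near the fixed points corresponding to $P_L$ and $P_R$ where the singularities of $X_\Delta$ live; everywhere in the torus the argument is immediate from $y=z=1$. An alternative, cleaner route that avoids chart computations would be to note that $\bar C$ is the closure of a single orbit under the one-parameter subgroup in the $x$-direction, hence an irreducible $T$-invariant curve by the orbit–cone correspondence; I would keep this as a fallback if the explicit chart analysis becomes unwieldy.
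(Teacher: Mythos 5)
Your overall reduction is the same as the paper's: $C^\circ=\{y=z=1\}\cap T\cong k^*$ is trivially irreducible, so the entire content of the lemma is that $\bar{S}_1\cap\bar{S}_2$ has no components inside the toric boundary. The gap is in how you propose to rule those out. Your chart-level claim --- that ``in each boundary chart one of $1-y$, $1-z$ restricts to a unit or to an equation transverse to the other'' --- fails as stated. Each surface $\bar{S}_i$ individually \emph{does} contain entire boundary curves: the section $\chi^{(0,0,0)}-\chi^{(0,1,0)}$ defining $\bar{S}_1$ restricts to zero on the orbit closure of any face of $\Delta$ containing neither $(0,0,0)$ nor $(0,1,0)$, so such a boundary curve lies in $\bar{S}_1$ (for instance the edge joining $(0,0,1)$ to $P_R$; the model case is $\PP^3$, where the closure of $\{y=1\}$ is the plane $X_0=X_2$, which contains the coordinate line $X_0=X_2=0$). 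Likewise $\bar{S}_2$ contains the boundary curve of the edge joining $(0,1,0)$ to $P_R$. Both of these curves pass through the torus-fixed point of $P_R$, where $X_\Delta$ is singular, so near that point neither local equation is a unit, and ``transversality'' is neither available nor really meaningful there. What you actually need is a complementarity statement: no boundary curve lies in \emph{both} surfaces simultaneously, and your proposal never establishes this.

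The paper's proof supplies exactly that, orbit by orbit rather than chart by chart: for every face of $\Delta$ of dimension $1$ or $2$, at least one of the two pairs $\{(0,0,0),(0,1,0)\}$, $\{(0,0,0),(0,0,1)\}$ meets the face in exactly one point (one checks this from the list of faces, both for the bipyramid and for the degenerate tetrahedron; in particular no such face can contain all three points, since their affine span is the plane $x=0$, which is not a supporting hyperplane because $x_L<0<x_R$). Hence the corresponding section restricts to a nonvanishing monomial on that orbit, so that $\bar{S}_i$ misses the orbit entirely, and $\bar{S}_1\cap\bar{S}_2$ can meet the boundary only in fixed points. Since every component of an intersection of two divisors in a threefold has dimension at least $1$ (Krull), no component of $\bar{C}$ can be a fixed point, so every component meets $T$ and is the closure of the irreducible $C^\circ$. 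Finally, your fallback is incorrect: $\bar{C}$ is the closure of a one-parameter subgroup of $T$, not a $T$-orbit --- it is not $T$-invariant, so the orbit--cone correspondence does not apply to it; and in any case that argument would only re-prove the trivial fact that $\overline{C^\circ}$ is irreducible, not the actual content of the lemma, which is that $\bar{S}_1\cap\bar{S}_2$ equals this closure.
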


\begin{proof}
We consider the intersection of $\bar{C}$ with $T$-orbits of $X_\Delta$. For any $T$-orbit of dimension $1$ or $2$, the restriction of at least one $S_i$ to the orbit is defined by a monomial equation, hence that $S_i$ does not intersect the $T$-orbit. This implies that $\bar{C}$ does not contain any component in $X_\Delta \setmin T$ and hence is irreducible. 
\end{proof}

Let $S_1$, $S_2$, $C$ be the strict transforms of $\bar{S}_1$, $\bar{S}_2$, $\bar{C}$ in $X$. Then $S_1$ and $S_2$ both have class $H-E$ and $C=S_1\cap S_2$.

\begin{lemma}
The class of $C$ generates an extremal ray in $\overline{NE(X)}$. The dual face of $Nef(X)$ is generated by the class $\frac{1}{w}H - E$. 
\end{lemma}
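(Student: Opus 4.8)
The plan is to exploit that $N^1(X)=\langle H,E\rangle$ is two-dimensional, so that $\overline{NE(X)}$ and $Nef(X)$ are dual two-dimensional cones, each spanned by two rays; it then suffices to identify their generators. I will reduce the lemma to two inputs: that $C$ pairs as $H\cdot C=w$ and $E\cdot C=1$, and that $D:=\frac1w H-E$ is nef. Granting these, $D\cdot C=\frac1w(H\cdot C)-E\cdot C=0$. Let $\ell$ be the class of a line in the exceptional $\PP^2=E$, so that $H\cdot\ell=\pi^*\bar H\cdot\ell=0$ and $E\cdot\ell=\deg(\cO_E(-1)|_\ell)=-1$, whence $D\cdot\ell=1$. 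The effective classes $\ell,C$ are independent (their pairings against $(H,E)$ are $(0,-1)$ and $(w,1)$), so $\overline{NE(X)}\supseteq\mathrm{Cone}(\ell,C)$. Conversely $H$ and $D$ are nef and independent, and a direct check in these coordinates gives $\mathrm{Cone}(H,D)^\vee=\mathrm{Cone}(\ell,C)$; dualizing $\mathrm{Cone}(H,D)\subseteq Nef(X)$ yields $\overline{NE(X)}\subseteq\mathrm{Cone}(\ell,C)$. Hence $\overline{NE(X)}=\mathrm{Cone}(\ell,C)$, so $C$ spans an extremal ray, and its dual face $C^\perp\cap Nef(X)$ is the ray $\RR_{\ge0}D=\RR_{\ge0}(\frac1w H-E)$.

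For the pairing, $C$ is the strict transform of $\bar C$, which is smooth at the torus point $t_0$ and passes through it once, so $E\cdot C=\mathrm{mult}_{t_0}\bar C=1$. For $H\cdot C=\pi^*\bar H\cdot C=\bar H\cdot\bar C$, I use that $\bar C$ is the closure of the one-parameter subgroup $\{(\lambda,1,1)\}$ in the lattice direction $(1,0,0)$: the degree of such an orbit closure against the polarization $\bar H$ attached to $\Delta$ equals the lattice length of the image of $\Delta$ under the functional $(1,0,0)$, namely the interval $[x_L,x_R]$ of length $w$. Thus $\bar H\cdot\bar C=w$. (Equivalently one computes $[C]=(H-E)^2$ from $C=S_1\cap S_2$ together with $H^3=6\,\mathrm{Vol}(\Delta)=w$ and $E^3=1$.)

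The key remaining point, and the main obstacle, is nefness of $D=\frac1w H-E$; here condition (1), $w\le1$, enters. Let $\Gamma\subset X$ be irreducible. If $\Gamma\subset E$ then $\Gamma\equiv c\,\ell$ with $c>0$ and $D\cdot\Gamma=c>0$. Otherwise $\Gamma$ is the strict transform of $\bar\Gamma=\pi(\Gamma)$, with $H\cdot\Gamma=\bar H\cdot\bar\Gamma$ and $E\cdot\Gamma=\mathrm{mult}_{t_0}\bar\Gamma=:\mu\ge0$. If $\mu=0$ then $D\cdot\Gamma=\frac1w\bar H\cdot\bar\Gamma\ge0$, and if $\bar\Gamma=\bar C$ then $D\cdot\Gamma=0$. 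In the remaining case $\bar\Gamma$ passes through $t_0$ and differs from $\bar C=\bar S_1\cap\bar S_2$, hence is not contained in at least one of $\bar S_1,\bar S_2$; say $\bar\Gamma\not\subset\bar S_1$. Since $\bar S_1$ is an effective $\QQ$-Cartier divisor numerically equivalent to $\bar H$ that is smooth at the smooth point $t_0$ and meets $\bar\Gamma$ properly there, the local intersection multiplicity satisfies $(\bar S_1\cdot\bar\Gamma)_{t_0}\ge\mathrm{mult}_{t_0}\bar S_1\cdot\mathrm{mult}_{t_0}\bar\Gamma=\mu$, so that $\bar H\cdot\bar\Gamma=\bar S_1\cdot\bar\Gamma\ge\mu$. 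Therefore $D\cdot\Gamma=\frac1w(\bar H\cdot\bar\Gamma)-\mu\ge(\frac1w-1)\mu\ge0$, using $w\le1$. This proves $D$ is nef.

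The points needing the most care are: the combinatorial degree formula $\bar H\cdot\bar C=w$ for the orbit closure (together with $\mathrm{mult}_{t_0}\bar C=1$); the case analysis in the nefness argument, in particular that $\bar\Gamma\neq\bar C$ forces $\bar\Gamma\not\subset\bar S_1\cap\bar S_2$ so that one of the $\bar S_i$ gives a proper intersection; and the local intersection inequality at $t_0$, which is legitimate precisely because $t_0$ lies in the smooth locus $T$ even though $X_\Delta$ is only a (non-$\QQ$-factorial) projective toric variety. I expect the argument to be uniform in the degenerate tetrahedron case, since neither the width computation nor smoothness at $t_0$ sees that degeneration.
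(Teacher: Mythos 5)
Your proposal is correct, and all the key steps check out: the pairings $H\cdot C=w$, $E\cdot C=1$, the nefness of $D=\frac{1}{w}H-E$, and the cone duality argument are each sound. But it is organized quite differently from the paper's proof. The paper never verifies nefness of $D$ directly: it computes $S_i\cdot C=S_i^3=(H-E)^3=w-1\le 0$, observes that any irreducible curve $C'\neq C$ cannot lie in $S_1\cap S_2$ and hence satisfies $(H-E)\cdot C'\ge 0$, concludes extremality of $C$ from the $2$-dimensionality of $\overline{NE(X)}$, and then identifies the dual face purely from the orthogonality $(\frac{1}{w}H-E)\cdot(H-E)^2=0$; nefness of $D$ comes out as a byproduct of cone duality rather than serving as an input. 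You instead front-load a direct nefness proof -- note that your local multiplicity inequality $\bar H\cdot\bar\Gamma\ge \mathrm{mult}_{t_0}\bar\Gamma$ is exactly the paper's inequality $S_1\cdot\Gamma\ge 0$ for $\Gamma\not\subset S_1$ in disguise, via $\pi^*\bar S_1=S_1+E$ -- and you compute $H\cdot C$ and $E\cdot C$ separately by the toric degree formula and the multiplicity of $\bar C$ at $t_0$, where the paper uses the single triple product $H^3=6\operatorname{Vol}(\Delta)=w$. The two routes share the essential geometric inputs (the surfaces $S_1,S_2$ cutting out $C$, and $w\le 1$), but yours buys something concrete: a complete description $\overline{NE(X)}=\operatorname{Cone}(\ell,C)$ and $Nef(X)=\operatorname{Cone}(H,D)$, and in particular the class $\ell$ of a line in $E$ settles the sign question of why the dual face is $\RR_{\ge 0}D$ rather than $\RR_{\ge 0}(-D)$, a point the paper leaves implicit. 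The paper's route buys brevity: one sign computation plus one orthogonality computation suffice, with no case analysis over curves in $E$ or through $t_0$.
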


\begin{proof}
We can compute the intersection number
\[ \bar{S}_i^3 = H^3 = 6 Vol(\Delta) = w.\]
Hence $S_i^3 = w-1 \leq 0$. Now 
\[ S_i \cdot C = S_i^3 \leq 0.\]
Any other irreducible curve $C'$ in $X$ does not lie on either $S_1$ or $S_2$, hence $S_i\cdot C' \geq 0$. It follows that the class of $C$ lies on the boundary of $\overline{NE(X)}$, and since this cone is $2$-dimensional, $C$ generates an extremal ray.

The class $\frac{1}{w}H - E$ is orthogonal to $C$:
\[ (\frac{1}{w}H - E)\cdot C = (\frac{1}{w}H - E)(H-E)(H-E) = \frac{1}{w}H^3 - 1 = 0,\]
hence it generates a boundary ray of $Nef(X)$.
\end{proof}

It now remains to show that a divisor in the class $\frac{1}{w}H - E$ is not semiample. Let $m$ be as in the theorem, with $m\Delta$ integral, and let $M=mw \in\ \ZZ$. Notice that any positive integer multiple of $m$ also satisfies the hypotheses of the theorem.
Consider the divisor class $M(\frac{1}{w}H - E) = mH-ME$. We show that any 
\[ f(x,y,z) = \sum_{q\in m\Delta\cap \ZZ^3} c_q \chi^q \]
that vanishes to order at least $M$ at $t_0=(1,1,1)$ must have $c_{m P_L} = 0$. This implies that the $T$-fixed point corresponding to $P_L$ is a base point for $M(\frac{1}{w}H - E)$.
This argument run with $m$ replaced by any of its positive integer multiples, allows us to deduce that $\frac{1}{w}H - E$ is not semiample. 

As in the $2$-dimensional case, we need to produce a partial derivative $\cD$ of order $M-1$ such that, when applied to any monomial $\chi^q$ for $q\in m\Delta\cap \ZZ^3$, it vanishes at $t_0$ if and only if $q\neq m P_L$.
To find such $\cD$, we first translate $m\Delta$ so that $mP_R$ becomes equal to $(M-2,0,0)$. Then $m P_L$ moves to $(-2, \beta,\gamma)$, where $\beta = my_L-my_R$, $\gamma = mz_L-mz_R$. We look for $\cD$ of the form 
\[ \cD = \partial_x^{M-n-1} \tilde{\cD},\]
where $\tilde{\cD}$ has order $n$. When applying $\partial_x^{M-n-1}$ to monomials $\chi^q$ for $q\in m\Delta\cap \ZZ^3$, the resulting nonzero terms $a_p \chi^{p}$ correspond to lattice points $p$ that can be divided into three sets:
\begin{align*}
 T_1 &= \{ (-A-1,\beta, \gamma)\},\\
 T_2 &= \{ (-A, B+i, C+j)\}_{i,j\geq 0, i+j<n},\\
 T_3 &= \{ (l, i,  j) \}_{l, i,j\geq 0, l+i+j <n},
 \end{align*}
where $A=M-n$, $B=b-my_R$, $C=c-m z_R$. Here we used the assumptions $0\leq \frac{y_R}{x_R}, \frac{z_R}{x_R} <1$ and (2a) of Theorem~\ref{thm-3D} to describe the set $T_3$. 

Finding a derivative $\tilde{D}$ as above is equivalent to finding a degree $n$ polynomial $p(X,Y,Z)$ that vanishes on $T_2$ and $T_3$, but not on $T_1$. The necessary and sufficient conditions for the existence of such polynomial are given in Lemma~\ref{lem-deriv-3D}. We need to check that the assumptions of Theorem~\ref{thm-3D} imply the assumptions of the lemma.

In the notation of Lemma~\ref{lem-deriv-3D},
\[ \bbeta = \beta-B = m y_L-b,\]
\[ \bgamma = \gamma-C = m z_L-c.\]
Now condition (1) in the lemma is the same as (2b) in the theorem. For the remaining conditions one can compute that the equality $\bbeta=\frac{nB}{A}$ is equivalent to $my_L = b-ns_y$ and $\bgamma=\frac{nC}{A}$ is equivalent to $mz_L = c-ns_z$. This implies that (3a), (3b) in the theorem are the same conditions as (2), (3) in the lemma, finishing the proof of Theorem~\ref{thm-3D}.

Corollary~\ref{cor-3D-n1} follows directly from Theorem~\ref{thm-3D}. Corollary~\ref{cor-3D-tetr} is also obtained from this theorem as follows.
Given a tetrahedron as in Corollary~\ref{cor-3D-tetr}, we apply a shear transformation to arrange $0\leq \frac{y_R}{x_R}, \frac{z_R}{x_R} <1$. For a tetrahedron these inequalities imply that $m y_L\leq b$ and $m z_L \leq c$. Hence conditions (2b) and (3b) of Theorem~\ref{thm-3D} hold trivially. The other conditions of the theorem follow from the three conditions of Corollary~\ref{cor-3D-tetr}. 
In condition (2) of Corollary~\ref{cor-3D-tetr} we only required that the slice $mx_R-n+1$ has size $n$ instead of requiring that slices $mx_R, mx_R-1,\ldots,mx_R-n+1$ have size $1,2,\ldots,n$. The stronger condition can fail if the slice $mx_R-1$ has size $1$ instead of the required $2$. However, then by reflecting the tetrahedron across the $yz$-plane, we are in the case $n=1$, which automatically gives a non-MDS.  
Corollary~\ref{cor-3D-tetr1} is a direct translation of Corollary~\ref{cor-3D-tetr} in terms of $(x_L, x_R, y_0,z_0)$.

Let us now prove Theorem~\ref{thm-proj3}. The proof is similar to the proof in dimension $2$ \cite{GK}.

Let $\PP(a,b,c_1,c_2) = \Proj k[x,y,z_1,z_2]$, where the variables $x,y,z_1,z_2$ have degree $a, \allowbreak b, \allowbreak c_1, \allowbreak c_2$, respectively. The relation $(e,f,g_1, g_2)$ gives three monomials  $x^ey^f$,  $z_1^{g_1}$ and $z_2^{g_2}$ of degree $d=ae+bf=c_ig_i$.

Consider the degree map $deg: \RR^4\to \RR$ that maps $(u,v,w_1,w_2) \mapsto au+bv+c_1w_1+c_2w_2$. The tetrahedron $\Delta$ is then $\deg^{-1}(d) \cap \RR^4_{\geq 0}$ in the space $deg^{-1}(d) \isom \RR^3$ with lattice $deg^{-1}(d)\cap \ZZ^4 \isom \ZZ^3$. We identify points in $\Delta$ with points in $deg^{-1}(d)$ as follows:
\begin{alignat*}{5}
(0,0,0) && \mapsto && (e,f,0,0), && \qquad  P_R &&\mapsto (\frac{d}{a},0,0,0), \\
(0,1,0) && \mapsto && (0,0,g_1,0), && \qquad  P_L &&\mapsto (0,\frac{d}{b},0,0).\\
(0,0,1) && \mapsto && (0,0,0,g_2), && \qquad && 
\end{alignat*}
The gcd conditions on the relation $(e,f,g_1,g_2)$ imply that this identification is compatible with the isomorphism of lattices.

A homogeneous polynomial of degree $d$ defines a divisor $D$ on $\PP(a,b,c_1,c_2)$ with self-intersection number 
\[ D^3 = \frac{d^3}{abc_1c_2}.\]
This identifies condition (1) in Theorem~\ref{thm-proj3} and Corollary~\ref{cor-3D-tetr}. 

To count lattice points in slices of $m\Delta$, consider the linear function $h(u,v,w_1,w_2)$ defined by dot product with 
\[  \frac{d}{c_1 c_2}(f,-e,0,0). \]
We claim that the function $h$ takes value $c$ on slice $c$. This can be proved by checking that $h$ vanishes on slice $0$ and when evaluated at the vertices $P_L$ and $P_R$, it gives the correct width $w$.

Consider now lattice points $Q$ in slice $mx_L+1$ in $m\Delta$. We replace these lattice points $Q$ with $Q-mP_L$. The new points are of the form $(u,v,w_1,w_2) \in \ZZ^4$, $u,w_1,w_2\geq 0$, $v\leq 0$, satisfying the equations 
\begin{gather*}
   h(u,v,w_1,w_2)=1 \quad \Leftrightarrow \quad  \frac{d}{c_1 c_2}(fu - ev) = 1\\
  deg(u,v,w_1,w_2)=0  \quad \Leftrightarrow \quad au + bv+ c_1 w_1 + c_2 w_2 = 0.
  \end{gather*}
There is a rational point
\[ \frac{1}{g_1 g_2}(b,-a,0,0) \]
satisfying these equations. To get integral points we subtract from this a rational linear combination of $(e,f,-g_1,0)$ and $(e,f,0,-g_2)$: 
\[ (u,v,w_1,w_2) =  \frac{1}{g_1 g_2}(b,-a,0,0) + \frac{\delta_1}{g_1}(e,f,-g_1,0) + \frac{\delta_2}{g_2}(e,f,0,-g_2),\quad \delta_1, \delta_2 \leq 0.\]
Replacing $v$ with $-v$, the slice $mx_L+1$ in $m\Delta$ can be identified with pairs of integers $\delta_1, \delta_2 \leq 0$ such that 
\[ (u,-v) =  \frac{1}{g_1 g_2}(b,a) + (\frac{\delta_1}{g_1}+ \frac{\delta_2}{g_2})(e,-f)\]
has non-negative integer components. 

By a similar argument, the slice $mx_R-n+1$ in $m\Delta$ can be identified with pairs of integers $\gamma_1, \gamma_2 \geq 0$ such that 
\[ \frac{n-1}{g_1 g_2}(b,a) + (\frac{\gamma_1}{g_1}+ \frac{\gamma_2}{g_2})(e,-f)\]
has non-negative integer components. 

Finally, the condition $n(s_y,s_z) \in\ZZ^2$ in Corollary~\ref{cor-3D-tetr} is equivalent to the slice $mx_R-n$ in $m\Delta$ having a lattice point on the edge joining $mP_L$ and $mP_R$. Similarly to the slice $mx_R-n+1$ this happens if and only if  
\[ \frac{n}{g_1 g_2}(b,a)\]
has integer components.

\bibliographystyle{plain}
\bibliography{cox}

\begin{thebibliography}{10}

\bibitem{BH07}
Florian Berchtold and J{\"u}rgen Hausen.
\newblock Cox rings and combinatorics.
\newblock {\em Transactions of the American Mathematical Society},
  359(3):1205--1252, 2007.

\bibitem{Castravet1}
Ana-Maria Castravet.
\newblock Mori dream spaces and blow-ups.
\newblock {\em arXiv: 1701.04738}.

\bibitem{CastravetTevelev}
Ana-Maria Castravet and Jenia Tevelev.
\newblock $\overline{M}_{0,n}$ is not a {M}ori dream space.
\newblock {\em Duke Mathematical Journal}, 164(8):1641--1667, 2015.

\bibitem{Cutkosky}
Steven~Dale Cutkosky.
\newblock Symbolic algebras of monomial primes.
\newblock {\em J. Reine Angew. Math.}, 416:71--89, 1991.

\bibitem{Fulton}
William Fulton.
\newblock {\em Introduction to toric varieties}, volume 131 of {\em Annals of
  Mathematics Studies}.
\newblock Princeton University Press, Princeton, NJ, 1993.
\newblock The William H. Roever Lectures in Geometry.

\bibitem{GK}
Jos\'e~Luis Gonz\'alez and Kalle Karu.
\newblock Some non-finitely generated {C}ox rings.
\newblock {\em Compos. Math.}, 152(5):984--996, 2016.

\bibitem{GNW}
Shiro Goto, Koji Nishida, and Keiichi Watanabe.
\newblock Non-{C}ohen-{M}acaulay symbolic blow-ups for space monomial curves
  and counterexamples to {C}owsik's question.
\newblock {\em Proc. Amer. Math. Soc.}, 120(2):383--392, 1994.

\bibitem{He}
Zhuang He.
\newblock New examples and non-examples of {M}ori dream spaces when blowing up
  toric surfaces.
\newblock {\em arXiv: 1703.00819}.

\bibitem{HuKeel}
Yi~Hu and Sean Keel.
\newblock Mori dream spaces and {GIT}.
\newblock {\em Michigan Math. J.}, 48:331--348, 2000.
\newblock Dedicated to William Fulton on the occasion of his 60th birthday.

\bibitem{Okawa}
Shinnosuke Okawa.
\newblock On images of {M}ori dream spaces.
\newblock {\em Math. Ann.}, 364(3-4):1315--1342, 2016.

\end{thebibliography}

\end{document}